\documentclass[12pt]{amsart}
\usepackage{extsizes}
\usepackage{amssymb,amsfonts,amsthm,amsmath}
\usepackage{graphicx}
\usepackage{color}
\usepackage[mathscr]{eucal} 
\usepackage{subcaption}
\usepackage{mathtools}
\usepackage[pagebackref=true]{hyperref}
\usepackage{hyperref}
    \hypersetup{
          unicode   = true,
          colorlinks= true,
    }
\usepackage{cite}

\AtBeginDocument{
\addtolength\abovedisplayskip{-0.25\baselineskip}
 \addtolength\belowdisplayskip{-0.25\baselineskip}
 \addtolength\abovedisplayshortskip{-0.25\baselineskip}
  \addtolength\belowdisplayshortskip{-0.25\baselineskip}
}
\usepackage[left=1 in,top=1 in,right=1 in, bottom=1 in]{geometry}
\usepackage[T1]{fontenc} 
\numberwithin{equation}{section}
\linespread{1.3}

\definecolor{darkred}{rgb}{.70,.12,.20}

\definecolor{darkgreen}{rgb}{.20,.52,.14}

\definecolor{byz}{rgb}{.44,.16,.39}

\numberwithin{equation}{section}
\setlength{\marginparwidth}{2cm}
\setcounter{equation}{0}
\setlength\parindent{0pt} 

\DeclareMathOperator{\supp}{supp}
\newtheorem{lemma}{Lemma}
\newtheorem{remark}{Remark}
\newtheorem{definition}{Definition}
\newtheorem{corollary}{Corollary}
\newtheorem{assumption}{Assumption}
\newtheorem{theorem}{Theorem}

\newtheorem{axiom}{Axiom}
\newtheorem{proposition}{Proposition}
\newtheorem*{Axiom}{Axiom of Mass conservation}
\newtheorem{corollary }{Corollary}
\newtheorem{Lad-Ur}{Ladyzhenskaya-Uraltceva iterative Lemma}
\newtheorem{hypotheses}{Hypotheses}
\newtheorem{exmpl}{Example}
\usepackage{todonotes}

\title[Iterative Energy Estimate - Degenerate Einstein Brownian model]{An Iterative Energy Estimate for Degenerate Einstein model  of Brownian motion }
\author{
Isanka Garli Hevage$^1$, Akif Ibraguimov$^2$, Zeev Sobol$^{3}$
}
\date{}
 \setlength{\parskip}{0pt}

\begin{document}
\maketitle
\begin{center}
{$^1$ Department of Mathematics and Statistics, Texas Tech University\\
Lubbock, Texas, USA, e-mail: \texttt{isankaupul.garlihevage@ttu.edu}
\smallskip
\\
{$^2$ Department of Mathematics and Statistics, Texas Tech University,}
\\
\small{Lubbock, Texas, USA, e-mail: \texttt{akif.ibraguimov@ttu.edu}}
\smallskip
\\
{$^3$ Department of Mathematics, University of Swansea,}
\\
{Fabian Way, Swansea SA1 8EN, UK, e-mail: \texttt{z.sobol@swansea.ac.uk}}
}
\end{center}
\begin{abstract}
\noindent
We consider the degenerate Einstein's Brownian motion model for the case when the time interval ($\tau$)  of particle Jumps before collision (free jumps) reciprocal to the number of particles per unit volume $u(x,t) > 0$ at the point of observation $x$ at time $t$. The parameter $0 < \tau \leq C < \infty$, controls characteristic of the fluid  "almost decreases"  to $ 0 $ when $u \rightarrow \infty$. This degeneration leads to the localisation of the spread of particle propagation in the media. In our report we will present a structural condition of the time interval of free jumps - $\tau$ and the frequency of these free jumps $\phi$ as  functions of $u$ which guarantees the finite speed of propagation of $u$. 
\end{abstract}
\section{Introduction}
Consider the sought experiment of the fluid which occupy a bounded domain $\Omega\subset \mathbb{R}^{N} \ ; \ N=1,2,\dots$  by the particles. Let $\vec{\Delta}$ be the $N$ dimensional vector of the free jumps which we  define  as the particles' jumps without collisions. Let  $(\Delta_{1},\Delta_{2}, \cdots,\Delta_{N})$ are the coordinates of free jump.In definition of free jump we follow the classical Einstein paradigm.  Note that in many literature instead of free jump term of free pass is used. From this point of view events of free jump and free pass are synonyms. \\
In his famous thesis (see \cite{Einstein56}), Einstein assumed that process of random motion of the particles is characterised by events of free jumps of the particles. Einstein proposed two main parameters which characterise free jumps : $\tau$ -the time interval within which the free jumps occur, and $\varphi(\Delta)$ - frequency of the occurrences of free jumps of the length $\Delta$. Einstein assumed that $\tau$ and the variance $ \displaystyle \sigma^2= {\int_{-\infty}^{\infty} {\Delta}^2 \varphi(\Delta)\ d \Delta} $ are to be constants which allowed him to reduce his sought experiment to the classical heat equation with constant coefficients where that exhibits the so called effect of infinite speed of the perturbation. Namely if at some moment $t_0$ of time and at any point $x_0$ concentration of particles $u(x,t) > 0$ then $u(x,t)>0$  for all $x$ and $t \geq t_{0}$. This property of $u(x,t)$ is very unrealistic. The question which we asked in this article is as follows: Can Einstein paradigm of free jumps be generalised in such way that one sees finite speed of propagation of the particles ? i.e. \textit{If $u(x,0)=0$ only at some point $x_0$ then $u(x_0,t)=0$   during time interval $[0,T]$ for some $T>0$.}\\ 
In this article we will show that if time interval of free jumps $\tau$ is inverse proportional to the concentration $u$ or/and $u$ is proportional to  $\sigma^{2}$ then, under some assumptions of the proportionality, our solution of degenerate Einstein equation in IBVP\eqref{ibvp} will exhibit the finite speed of propagation property. This property closely relate to so called Barenblatt solutions for degenerate porous medium equation (see \cite{Barenblatt-96}).
It is necessary to state that origin of the porous media equation is differ from sought experiment of the Einstein paradigm. Namely the first continuity equation hypothesised in the form:
\begin{equation}\label{cont-flux-eq}
L(\rho,\vec{J} )\triangleq\rho_t+\nabla\cdot\left(\rho \vec{J}\right)=0,
\end{equation}
where $\rho$ is density and $\vec{J}$ is flux in porous media that equals to the velocity of the flow - $\vec v$ (see \cite{Aronson-1}). Next it is assumed that $\rho$ is function of the pressure $p$ and can be approximated based on thermodynamic experiment. For example, $\rho$ can be approximated by $p^{\lambda}$ for some gasses by thermodynamic laws and  $ \displaystyle \vec{v}=-\frac{k}{\mu}\nabla p,$ where $k$ - permeability of porous media, and $\mu$ is viscosity subject to the experimental Darcy equation (see \cite{Muskat}). Combining these experimental relations in \eqref{cont-flux-eq} one can get the following divergent equation for scalar pressure function
\begin{align}\label{cont-p-eq}
 L (w)=
 (w)_t - \frac{k}{\mu} \ \frac{1}{\lambda+1} \ \Delta \left(w^{ \frac{\lambda+1}{\lambda}}\right) = 0.
\end{align}
where $w = p^\lambda$ and $ \lambda > 0 $. In the pioneering work by Barenblat-Kompaneetz-Zeldovich (see \cite{Evan}) it was shown that under specific initial and boundary conditions there exists self-similar solution of the equation \eqref{cont-p-eq}, which exhibits finite speed  of propagation. Evidently, the Einstein operator $L_{e}$(see \eqref{M-1}) is in non-divergent form and  based on sought experiment which allows to interpret the results on observable data in term of the length of free jumps, and then adjust parameters $\tau$ and $\varphi$ of the model. \\
In this paper we will follows Einstein approach but will use technique for divergent equation.
Article is organized as follows: In Section \ref{Gen-Ein-Para}, we consider generalization of classical Einstein model  of Brownian motion to $\mathbb{R}^{N}$,  when major parameters of the system - time interval $\tau$ of the free jumps with a length $\Delta$ (free passes) and the frequency $\varphi$ of the free jumps of the length $\Delta$ depend on the concentration of the number of particle per unite volume. We use the generic mass conservation law with absorption/reaction and basic stochastic principles to derive a partial differential inequality (PDI) under the Einstein's axioms in Section \ref{Derivation PDI}. By introducing local forces and non-locality processes we conclude the deterministic PDI  \eqref{M-1} which governs the dynamics of the generalized Einstein paradigm. Using Hypothesis \ref{tau}, In Section \ref{Nonlin-IBVP} we define $u$ as the weak viscous solution to the non-linear initial boundary value problem \eqref{ibvp} such that holds \eqref{eps_sol}. Introducing the assumption on functions $F,G,P$ in Section \ref{Assumption-sec}, we explicitly structures $H$ in \eqref{H-choice} and $F$ in \eqref{F-result},  in a way that $u$ conserves the finite speed of propagation, verifying $H$ and $F$ throughout those assumptions with necessary conditions on $\Lambda$. In section \ref{iterative ineq} we establish corroborating  Lemmas and introduce Ladyzhenskaya Iterative scheme which apply in Section \ref{localization} to prove the localization property of $u$, based on De-Georgi's construction, if the functional $Y_{0}(T)$ preserves the boundednes in \eqref{X_0-assump}. Next in Section \ref{Exmp-P-F} we show some counter examples of the functions $F$ and $P$ that attest all constrains on $ F, G ,H $. We present auxiliary results of Functional spaces in Section \ref{viscous-solution} to prove the uniform boundednesses regarding the classical solution $u_{\epsilon}$  which ensures its limit as in \eqref{eps_sol}. This Degenerate porous media equation is obtained different from Einstein conservation law which state that rate of growth of the density is proportional  to divergence of the flux.
   
 
\section {Generalized Einstein paradigm}\label{Gen-Ein-Para} 
Let  $x \in   \mathbb{R}^{N} $ and  $u(x, t)$ be the 
function which represent  the number of particles per unit volume at point $x$ and at time $t$. Consider a particle ($P$) of particular type suspended in the medium of interest. Denote $\mathbb{P}(\tau)$ to be the set of vectors with non-colliding jumps of $P$ corresponding to time interval $\tau$.We call  $\vec{\Delta} 
= \big(\Delta_1, \cdots ,\Delta_N\big)^{T}$ 
to be a '' vector of free jump of particles $P$'' if $\vec{\Delta} \in  \mathbb{P}(\tau)$.
We assume the following extension of the axioms in classical Einstein Brownian motion:

\begin{assumption} \  \normalfont
\begin{enumerate} 
\item
 Consider a particle ($P$) of particular type suspended in the medium of interest. Denote $\mathbb{P}(\tau)$ to be the set of vectors with non-colliding jumps of $P$ corresponding to time interval $\tau > 0$. We call  $\vec{\Delta} 
= \big(\Delta_1, \cdots ,\Delta_N\big)^{T}$ to be a  vector of free jump of particles $P$ if $\vec{\Delta} \in  \mathbb{P}(\tau)$.

\item
\label{constitive} Time interval of free jumps $\tau$, expected vector $\vec{\Delta}_e$  of  a free jump $\vec{\Delta}$ and probability density function of free jump $\varphi(\vec{\Delta})$  are the only parameters  which characterise process of free jumps. Note that in a view of the definition of the set $\mathbb{P}(\tau)$, if $\vec{\Delta} \notin \mathbb{P}(\tau)$ then  $\varphi(\vec{\Delta})=0$.
\item
 The key parameters $\tau$ and $\varphi$ in general can depend on the concentration of the particles and also space coordinate and time itself.
\end{enumerate}
\end{assumption}
\begin{axiom} {Whole universe axiom:}
 \begin{align}\label{uni-ax}
      \int_{\mathbb{P}(\tau)}\varphi(\vec{\Delta})d\vec{\Delta} = 1.
 \end{align}
\end{axiom}

\begin{assumption}{Symmetry of free jumps}
\begin{equation}\label{symmetry}
\varphi(\vec{\Delta}) = \varphi(-\vec{\Delta}).    
\end{equation}
Existence of second moments
\begin{equation}
    \int_{\mathbb{P(\tau)}}|\vec{\Delta}|^2\varphi(\vec{\Delta})d\vec{\Delta}<\infty
\end{equation}
\end{assumption}

Note that it follows from the preceding assumption that the expectation of $\vec{\Delta}$ equals zero and that there exists
a co-variance matrix $[\sigma_{ij}^{2}]$ of \textit{free jumps}:
 \begin{definition}{Standard Co-variance matrix of free jump}
 \begin{align}
\sigma_{ij}^2 \triangleq  \int_{\mathbb{P}(\tau)}{\Delta_{i}}{\Delta_{j}} \varphi(\vec{\Delta})d\vec{\Delta} \quad  \text{ where }  i,j = 1,2,\dots, N.
\label{var-ij}
\end{align}
\end{definition}
Evidently  $\sigma_{ij}(x,t)$ depend on space $x$ and time $t$. We postulate generalized Einsteins Axiom for the number of particles found at time $t+\tau$ in the control volume $dv$  contained point $x$ by :
\begin{Axiom}\begin{equation}
u(x, t+\tau) \cdot  dv =  
\int_{\mathbb{P}(\tau)} u(x+ \vec{\Delta}, t)\varphi(\vec{\Delta}) d \vec{\Delta} \cdot dv
+
\tau\cdot\int_{\mathbb{P}(\tau)} M(x+ \vec{\Delta}, t)\varphi(\vec{\Delta}) d \vec{\Delta} \cdot
dv.\label{Einstein_conserv_eq}
\end{equation}
\end{Axiom}
Mass conservation law \eqref{Einstein_conserv_eq} intuitively is easy to interpret, and it says that at any given point in space $x$ at time $t+\tau$ we will observe density of all particles with free jumps from the point $x$ at time $t$,  "$+$ density" of  particles which "produced" and "$-$ density" of  particles which "consumed" during time interval $[t,t+\tau].$  For comparison see \cite{Einstein56} (pages 14) first formula with integral. We modeled this by adding the term $ M(\cdot)$ which model process of rate of absorption/consumption during time interval $[t,t+\tau]$ due to particles interaction  along of free jumps and therefore it set to be non-positive.  
\begin{remark}
Einstein definition of density of particles  differs from the fundamental definition of the density of fluid, rather mean the concentration.
\end{remark}
\section{Derivation of Partial differential Inequality} \label{Derivation PDI}
In this section we will derive partial differential inequality from \eqref{Einstein_conserv_eq} whose solution will exhibit the feature of finite speed of propagation.  
Let $\zeta = (\zeta_1,\zeta_2,\cdots,\zeta_N)$ multi-index and  $ x^{\zeta} \triangleq x_{1}^{\zeta_1} \cdot x_{2}^{\zeta_2} \cdot \dots \cdot x_{N}^{\zeta_N}. $ Assume that $u(x,t)\in C_{x,t}^{2,1} $. By Taylor's Expansion (see \cite{kon}), and using \eqref{uni-ax} - \eqref{var-ij} we get
\begin{align}
\int_{\mathbb{P}(\tau)}
u(x +\vec{\Delta},t)  \varphi(\vec{\Delta})d\vec{\Delta}
 \text{ = } 
\text{ } u(x,t) 
 \text{ + }
\frac{1}{2}\sum_{i,j=1} \sigma_{ij}^{2}u_{x_{i}x_{i}}{(x,t)}
\text{ + }
R_{\zeta}\label{Taylor-eq}
\end{align} 
where 
\begin{align}
R_{\zeta} \triangleq {\int_{\mathbb{P}(\tau) }
 \sum_{|\zeta| = 2}   
H_{\zeta}(x,\vec{\Delta},t)(\vec{\Delta})^{\zeta}
\varphi(\vec{\Delta})d\vec{\Delta}} \label{R-term}
\end{align}
with $H_{\zeta}$ locally bounded and
$\lim_{\vec{\Delta}\rightarrow 0} H_{\zeta}(x ,\vec{\Delta},t) = 0.
$ 
Using above in $\eqref{Einstein_conserv_eq}$ we get
\begin{equation}
u(x , t + \tau) - u(x,t) = 
 \frac{1}{2}\sum_{i,j=1}^N  \sigma_{ij}{u}_{x_{i} x_{j}} {(x,t)} + R_{\zeta} 
 + 
\tau\int_{\mathbb{P}(\tau)} M(x+ \vec{\Delta}, t)\varphi(\vec{\Delta}) d \vec{\Delta} .
 \label{post-Taylor}
\end{equation}
Observe that LHS and RHS in the equation \eqref{post-Taylor} are defined at different times. In order to eliminate this ambiguity and derive the equation at the same point we will assume that $u(x,t) \in C^{2,1}_{x,t}$. By Carathéodory's criterion    $\exists$ function $\psi^{t}$ such that 
\begin{align}\label{Caratheodory-1}\small
& u(x, t+ \tau)  - u (x, t) =
\tau \psi^{t} (x,t,\tau) 
\end{align}
where $  \lim_ {\nu \rightarrow 0} \psi^{t}(x,t,\nu) =u_{t}(x,t)$. Using  \eqref{post-Taylor} in \eqref{post-Taylor} we get 
\begin{equation}
\tau u_{t}(x,t) \approx
 \frac{1}{2}\sum_{i,j=1}^N  \sigma_{ij}{u}_{x_{i} x_{j}} {(x,t)} + R_{\zeta}
 + 
\tau\int_{\mathbb{P}(\tau)} M(x+ \vec{\Delta}, t)\varphi(\vec{\Delta}) d \vec{\Delta} .
 \label{post-Taylor}
\end{equation}
\begin{assumption}
 Mathematically for our forthcoming iterative procedure we will assume that
 \begin{align}
 R_{\zeta}+ \tau \int_{\mathbb{P}(\tau)} M(x+ \vec{\Delta}, t)\varphi(\vec{\Delta}) d \vec{\Delta}  \leq 0 . \label{Negative-pheno}
 \end{align}
\end{assumption}
Using   \eqref{Negative-pheno} in \eqref{post-Taylor} and approximating the first order terms, it follows that the function $u(x,t)$ can be estimated by the following Partial differential inequality:
\begin{align}
   L_{e} u =
   \tau{u_t}-
    \sum_{i,j=1}^N &  \sigma_{ij} u_{x_{i}x_{j}} \leq 0.
\label{M-1} 
\end{align}
Observe that the partial differential inequality \eqref{M-1} is deterministic, while the stochastic nature is modeled by functions    $\tau$ and $\varphi$ (the latter appears in \eqref{M-1} in form of its co-variance matrix [$\sigma_{ij}$]), which  are key characteristics of the process dynamics. In general, they can be functions of spatial and time variables  $x$ and $t$, concentration $u$ and its gradient $\nabla u$. In this report we present $\tau$ and $\sigma$ satisfy the following Hypothesis.
\begin{hypotheses}\label{tau}

In addition to \eqref{uni-ax} and 
\eqref{symmetry}, let 
\begin{equation}\label{def-P-compos}
    c_1P(u)|\xi|^2 \leq \sum\limits_{ij=1}^N \frac1\tau \sigma_{ij}\xi_i\xi_j\leq c_2P(u)|\xi|^2, \quad \xi\in\mathbb{R}^N
\end{equation}
for some $0<c_1<c_2$ and $ P\in C([0,\infty))$, $P(0) = 0$, $ 0 < P(s) < C < \infty$ for $s>0$.
\end{hypotheses}
\begin{remark}
Note that in \eqref{def-P-compos} $P(u)$ is a composite parameter of stochastic processes of free jump, which characterise relative co-variance of free jump with respect to time interval of latter, which in our scenario degenerate at $0$. Which mean that dispersion growth slower   than time interval of this dispersion, as the concentration vanishes.   
\end{remark}
\section{Non-linear Degenerate IBVP} \label{Nonlin-IBVP}
Let $\sigma > 0$. For simplicity of the argument  we will restrict the exposition to the case $\sigma_{ij} = \sigma \delta_{ij}$ for $i,j=1,\dots,N$ so that $P=\dfrac{\sigma}{\tau}$. Hence \eqref{M-1} takes form
\begin{align}
    u_t \leq P(u) \Delta u. \label{M-2}
\end{align}
\begin{definition}\label{G-def}\
\begin{enumerate}
    \item Let  $h$ be a positive function such that $h\in C((0,\infty))$  and integrable at zero w.r.t. $u$. Then we define \begin{equation}\label{H-F-def}
  H(u) \triangleq\int_0^u h(s) ds 
\end{equation}
    \item Let $F \triangleq hP$ and $h$,$P$ be such that $F(0)=0$, and $F$ is differentiable on $(0,\infty)$ with a locally bounded derivative $F^{'}>0$. 
    \item Let $G$ be s.t.  $\sqrt{F^{'}(u)}=G^{'}(u)$ and $G(0)=0$. Then $\displaystyle G(s)\triangleq \int_{0}^{s} \sqrt{F^{'}(s)} \ ds. $ \label{F-def}
\end{enumerate}
\end{definition}
\begin{remark}
Note that 
\begin{equation}\label{G-F-2} 
0\leq G(u)=\int_0^u G^{'}(s)ds \leq \sqrt{u\int_0^u F^{'}(s)ds} = \sqrt{uF(u)}.
\end{equation}
In above, all functions $F,G$ and $H$ are increasing w.r.t. $s$. 
\end{remark}
Further restriction on $P$ and of $h$ will be presented later.
We multiply \eqref{M-2} by $h(u)$ to get.
\begin{align}
 \dfrac{\partial}{\partial t} H(u) -   F(u)\Delta u \quad \mbox{on } \Omega\times(0,T)\ , \label{L-OP}
\end{align}
for some $T>0$. Here $u$ is positive measurable such that $u(\cdot,t) \to u(\cdot,0)$ as $t\to 0$ in (locally) measure, and
\begin{equation}\label{space-for-solution}
\begin{aligned}
u\in L_{loc}^{\infty}\left(\Omega\times [0,T]\right),\quad
\nabla u\in L^{2}_{loc}\left( \Omega \times (0,T]\right),
\quad u_t\in  L^{1}_{loc}\left( \Omega \times (0,T]\right)
.
\end{aligned}
\end{equation}
So $F(u)\Delta u$ is understood in the weak sense: for all
$\theta\in Lip(\Omega)$ with a compact support,
\begin{equation}\label{weak Laplace}
\begin{aligned}
-\int \theta F(u)\Delta u dx & = \int \left[F(u)(\nabla u)(\nabla \theta) + F'(u) |\nabla u|^2\theta\right]dx\\
& = \int \left[F(u)(\nabla u)(\nabla \theta) +  \left|\nabla G(u)\right|^2\theta\right]dx.
\end{aligned}
\end{equation}
Then under the Hypotheses \ref{tau},  the differential inequality \eqref{M-1}  we defined $u(x,t)$ as a non-negative  solution of the following differential inequality and initial condition which we for convenience will call as IBVP
\begin{align}
\begin{cases} 
\ \dfrac{\partial}{\partial t} H(u) -   F(u)\Delta u   \ \leq 0 \  & \text{ in }  \Omega\times (0,T)
\label{ibvp} \\
\hspace{2.3 cm} u(x,0)   \  = 0    &\mbox{ in } \Omega^{'} \Subset \Omega   \\
\hspace{2.3 cm} \ u(x,t)  =\hspace{0.08 cm} \phi(x,t)         & \mbox{ on } \ {\partial \Omega \times (0,T)}.
 \end{cases} 
 \end{align}
where $\phi\geq 0$ and $\displaystyle \phi(x,t) = 0$ when $t \rightarrow 0$. The condition on the boundary $\partial\Omega\times(0,T)$ is not essential to prove that for every ball $B_{R}(x_{0}) \Subset \Omega^{'}$ and $c<1$, $\exists$ $T'=T'(x_0,c)\in (0,T]$ such that $u(x,t)=0$ for all $(x,t) \in B_{cR}(x_0)\times[0,T']$ in Theorem \ref{Main-T}. We also assume that $u(x,0)$ is continuous on $\Omega$. Observe that $u$ in above IBVP is degenerates when $u \rightarrow 0$. Therefore its solution $u(x,t) \not \in \mathcal{C}^{2,1}_{x,t}(\Omega\times (0,T])$. 
 Our result is qualitative and does not address the existence of the solutions, but the obtained property of the solution will be applicable for the weak viscous solution which one can define as follows. 
 Let us define the regularized IBVP$_{\epsilon}$  for some $u_{\epsilon} \in \mathcal{C}^{2,1}_{x,t}(\Omega\times (0,T])$ as follows:
\begin{align} 
\begin{cases} 
\ \dfrac{\partial}{\partial t} H(u_{\epsilon}) -   (F(u_{\epsilon})+\epsilon)\Delta u_{\epsilon}   \ \leq 0 \  & \text{ in }  \Omega\times (0,T)
\label{ibvp-ep} \\
\hspace{3.7 cm} u_{\epsilon}(x,0)   \  = \epsilon    &\mbox{ in } \Omega^{'} \Subset \Omega   \\
\hspace{3.7 cm} \ u_{\epsilon}(x,t)  =\hspace{0.08 cm} \phi(x,t) + {\epsilon}         & \mbox{ on } \ {\partial \Omega \times (0,T)}.
 \end{cases} 
 \end{align}
Assume that $u_{\epsilon}(x,0)$ is continuous in $\Omega$. In this article, the constants in all estimates for $u_{\epsilon}$  do not depend on $\epsilon$. This observation allow us to pass to the limit in the final estimates, and conclude the localization property for the limiting function:
\begin{equation}\label{eps_sol}
u(x,t)=\lim_{\epsilon\to 0}u_{\epsilon}(x,t),
\end{equation}   
which is considered as a weak passage to the limit (see \cite{CIL92}). The obtained function $u(x,t)$ is called a weak viscous solution of the IBVP \eqref{ibvp} with first differential inequality replaced by by the differential equation $Lu=0$ , which will exhibit localisation property. Further details on weak viscous solution of IBVP with homogeneous boundary function will be presented in Section \ref{viscous-solution}.  
\section{Assumptions on Functions and interpretation}\label{Assumption-sec}
In this section we state main properties and additional assumption, which we imposed on the functions $H$ and $F$ besides monotonously, we will require  for some finite $M>0$ following assumption holds
\begin{assumption}  \label{Assumps}
\begin{align}
   \exists\ C_{1} > 0 \mbox{ such that }  F(s)\leq C_{1}G^{'}(s)G(s),~ s\in[0,M]. \ \text{ Here } \  G(s)= \int_{0}^{s} \sqrt{F^{'}(s)} \ ds.  \label{A-1}\\
 \displaystyle \exists \   \lambda \in (0,2), C_{2}>0  \mbox{ such that }\left(\sqrt{sF(s)}\right)^{\lambda}\leq {C_{2}H(s)}, ~ s\in[0,M],  \label{A-2} 
\end{align}
\end{assumption}
We will choose $H$ and impose restrictions on $P$ such that 
Assumption \ref{Assumps} hold.
\begin{proposition} \label{P-1}
Let $F$ and $H$ hold \eqref{A-2}. Then
\begin{align}\label{H-prop}
    {H(s)\geq
\left( H^{-\Lambda}(M) + \Lambda C_{2}^{1+\Lambda}\int_{s}^{M} \frac{1}{\tau P(\tau)} d\tau \right)^{-\frac{1}{\Lambda}}},
\end{align}
with $\Lambda + 1= \dfrac{2}{\lambda}$ and $\infty > \Lambda > 0$.
\end{proposition}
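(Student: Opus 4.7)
The plan is to convert the pointwise algebraic inequality \eqref{A-2} into a first-order ordinary differential inequality for $H$, and then integrate it explicitly.

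First I would raise \eqref{A-2} to the power $2/\lambda = \Lambda+1$ to eliminate the square root: this gives
\[
s\, F(s) \;\leq\; C_2^{\Lambda+1}\, H(s)^{\Lambda+1},\qquad s\in[0,M].
\]
Next, by Definition \ref{G-def} we have $F = hP = H' P$, so substituting yields
\[
s\, P(s)\, H'(s) \;\leq\; C_2^{\Lambda+1}\, H(s)^{\Lambda+1}.
\]
Since $H$ is positive and strictly increasing on $(0,\infty)$ (as $h>0$), and $P(s)>0$ for $s>0$ by Hypothesis \ref{tau}, I can divide both sides by $s P(s)\, H(s)^{\Lambda+1}$ to obtain the separated form
\[
\frac{H'(s)}{H(s)^{\Lambda+1}} \;\leq\; \frac{C_2^{\Lambda+1}}{s\, P(s)}.
\]

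Now I recognize the left-hand side as an exact derivative: since $\Lambda>0$,
\[
\frac{d}{ds}\!\left[-\frac{1}{\Lambda}\, H(s)^{-\Lambda}\right] \;=\; \frac{H'(s)}{H(s)^{\Lambda+1}}.
\]
Integrating the inequality from $s$ to $M$ gives
\[
-\frac{1}{\Lambda}H(M)^{-\Lambda} + \frac{1}{\Lambda}H(s)^{-\Lambda} \;\leq\; C_2^{\Lambda+1}\int_s^M \frac{d\tau}{\tau P(\tau)},
\]
that is,
\[
H(s)^{-\Lambda} \;\leq\; H(M)^{-\Lambda} + \Lambda\, C_2^{\Lambda+1}\int_s^M \frac{d\tau}{\tau P(\tau)}.
\]
Finally, the map $x\mapsto x^{-1/\Lambda}$ is decreasing on $(0,\infty)$, so raising both sides to $-1/\Lambda$ reverses the inequality and produces exactly \eqref{H-prop}.

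The only point requiring care is the manipulation near $s=0$: one must check that $H(s)>0$ on $(0,M]$ (which follows from the integrability of $h$ near $0$ together with $h>0$) so that division by $H(s)^{\Lambda+1}$ is legitimate, and that the integral $\int_s^M d\tau/(\tau P(\tau))$ makes sense on the range we use (which is $s>0$; the result is only claimed for $s\in(0,M]$, and the stated inequality degenerates trivially at $s=0$). No hypothesis on the finiteness of $\int_0^M d\tau/(\tau P(\tau))$ is needed for the proof itself; this will be imposed separately when the estimate is applied.
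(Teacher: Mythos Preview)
Your proof is correct and follows essentially the same route as the paper: raise \eqref{A-2} to the power $2/\lambda=\Lambda+1$, substitute $F=hP=H'P$, separate variables to obtain $\dfrac{h(s)}{H^{\Lambda+1}(s)}\leq \dfrac{C_2^{\Lambda+1}}{sP(s)}$, and integrate from $s$ to $M$. Your write-up is in fact a bit more explicit than the paper's in identifying the antiderivative and executing the final inversion step.
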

\begin{proof}
We raise \eqref{G-F-2} to power $\lambda$ and use \eqref{A-2} to get, 
\begin{align}
  G^{\lambda}(s)\leq \left(\sqrt{sF(s)}\right)^{\lambda}  \leq {C_{2}H(s)} \implies
  h(s)P(s) = F(s) \leq C_{2}^{\frac{2}{\lambda}} \dfrac{H^{\frac{2}{\lambda}}(s)}{s}\label{before-beta}
\end{align}
Now we rewrite right hand side of  \eqref{before-beta} as follows:
\begin{align}
\frac{h(s)}{H^{\Lambda+1}(s)} 
& \leq \frac{C_{2}^{1+\Lambda}}{sP(s)}  \label{before-int}
\end{align}
Integrate  \eqref{before-int} over $(s,M)$ we get \ref{H-prop}.
\end{proof}
The preceding proposition implies the following choice of function $H$.
\begin{definition}\label{PIH}
Let $P$ in Hypotheses\eqref{tau} be such that \ 
$ \displaystyle
0 < \int_M^\infty\frac{ds}{sP(s)} <\infty
$, for some finite $M$.
Then for $s\in[0,M]$, let
\begin{align}
     I(s) & \triangleq \int_{s}^{\infty}\frac{d\sigma}{\sigma P(\sigma)} \ ; ~ M > s > 0 ,\label{I-s}
\end{align}
\begin{align}\label{H-choice}
\boxed{
 H(s) \triangleq 
    \left[\Lambda I(s)\right]^{-\frac{1}{\Lambda}} =
    \left(\Lambda \int_{s}^{\infty} \frac{1}{\tau P(\tau)} d\tau\right)^{-\frac{1}{\Lambda}},  
    ~ s>0.
}\end{align}
Function $P(s)$ and consequently $H(s)$ define only on bounded interval, But in order notation to be simple we extended $I(s)$ on whole axis in such order that $I(s)>constant>0$ on the interval $[M,\infty)$.
\end{definition}
\begin{remark} \label{new-def}
\normalfont
Note that, $H$ in \eqref{H-choice} has $H(0) = 0$ since $P(0) = 0$. Moreover, substitute \eqref{H-choice} in \eqref{H-F-def} to get
\begin{align}
    h(s) = \frac{1}{sP(s)}\left[  \Lambda I(s) \right]^{-\frac{1}{\Lambda}-1}
    = \frac{1}{sP(s)} H^{(\Lambda+1)}(s) 
    \ , ~ s>0; \label{h-ret}
\end{align}
Using \eqref{h-ret} for (ii) in Definition \ref{G-def} we get
\begin{align}
\boxed{ 
    F(s)=h(s)P(s)=\frac{1}{s} H^{(\Lambda+1)}(s)=\left(\Lambda s^{\frac{\Lambda}{1+\Lambda}}I(s) \right)^{-\frac{1}{\Lambda}-1}.
    \label{F-result}
}
\end{align}
Consequently,
\[
\left[sF(s)\right]^{\frac\lambda2} = 
H^{(\Lambda+1)\frac\lambda2}(s)=H(s),
\]
since $(\Lambda+1)\frac\lambda2=1$. Thus, \eqref{A-2} holds with $C_2=1$. 
\end{remark}
The next proposition imposes restrictions on $P$ such that $F$ as in \ref{F-result},
is non-negatively increasing on $[0,M)$.
\begin{proposition}\label{P-2}
Let $P$ be in Definition \ref{PIH} and $F$ be in \eqref{F-result}. Let 
\begin{align}
A & \triangleq  \sup_{0 < s < M} P(s)I(s) < \infty \label{A}\\
a & \triangleq  \limsup_{s\rightarrow 0} P(s)I(s) < A. \label{a}
\end{align}
If $\frac{\Lambda+1}{\Lambda} > A $ then $F'(s) > 0 $ on $(0,M)$,
and 
if $\frac{\Lambda+1}{\Lambda} > a$ then $\lim_{s \rightarrow 0}F(s) = 0$.
\end{proposition}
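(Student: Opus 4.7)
The plan is to reduce the question to a one-variable sign analysis by computing the logarithmic derivative of $F$, and then to handle the limit at zero by integrating that derivative up from the origin. Set $\beta \triangleq (\Lambda+1)/\Lambda$, so that \eqref{F-result} reads $F(s)=(\Lambda s^{1/\beta}I(s))^{-\beta}$. Taking logarithms gives $\log F(s)=-\beta\log\Lambda-\log s-\beta\log I(s)$, and since $I'(s)=-1/(sP(s))$ by \eqref{I-s}, a direct differentiation yields
\[ \frac{F'(s)}{F(s)}=-\frac{1}{s}+\frac{\beta}{sP(s)I(s)}=\frac{\beta-P(s)I(s)}{sP(s)I(s)}. \]
Because $F(s)>0$ and $sP(s)I(s)>0$ on $(0,M)$, the sign of $F'$ on $(0,M)$ coincides with the sign of $\beta-P(s)I(s)$. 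Under the hypothesis $\beta>A=\sup_{(0,M)}P(s)I(s)$ this quantity is strictly positive throughout $(0,M)$, which gives the first assertion.

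For the second assertion I would integrate the preceding identity. For $0<s<s_0<M$,
\[ \log F(s)=\log F(s_0)-\int_s^{s_0}\frac{\beta-P(\sigma)I(\sigma)}{\sigma P(\sigma)I(\sigma)}\,d\sigma. \]
The crucial observation is the telescoping identity $-(\log I(\sigma))'=1/(\sigma P(\sigma)I(\sigma))$, which is immediate from $I'(\sigma)=-1/(\sigma P(\sigma))$. Since $a<\beta$, set $\varepsilon\triangleq(\beta-a)/2>0$ and pick $s_1<M$ small enough that $P(\sigma)I(\sigma)<\beta-\varepsilon$ for all $\sigma\in(0,s_1)$. Then for $s<s_1<s_0$,
\[ \int_s^{s_1}\frac{\beta-P(\sigma)I(\sigma)}{\sigma P(\sigma)I(\sigma)}\,d\sigma \;\geq\; \varepsilon\int_s^{s_1}\frac{d\sigma}{\sigma P(\sigma)I(\sigma)} \;=\; \varepsilon\,[\log I(s)-\log I(s_1)]. \]

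To finish I need to know that $I(s)\to\infty$ as $s\to 0^+$, and this I would justify by an elementary comparison: by Hypotheses \ref{tau} we have $0<P(\sigma)<C$ on $(0,M)$, so $\frac{1}{\sigma P(\sigma)}>\frac{1}{C\sigma}$, and $\int_0^{M}d\sigma/\sigma=\infty$. Consequently the right-hand side above tends to $+\infty$ as $s\to 0^+$, whence $\log F(s)\to-\infty$ and $F(s)\to 0$, as claimed.

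The only real cleverness is spotting the telescoping $1/(\sigma PI)=-(\log I)'$, which converts the awkward integral into a logarithm and makes the divergence manifest. Everything else — the sign analysis of $F'$, the $\varepsilon$-choice from the $\limsup$ hypothesis, and the verification that $I$ blows up at the origin — is routine bookkeeping with the explicit form of $F$.
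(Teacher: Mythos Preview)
Your proof is correct. For the first assertion your logarithmic-derivative computation is exactly equivalent to the paper's direct differentiation of $s^{\Lambda/(\Lambda+1)}I(s)$; no real difference there.

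For the second assertion the executions diverge a little. The paper also starts from the bound $P(\sigma)I(\sigma)<a+\epsilon$ near the origin, but instead of bounding $\int F'/F$ it rewrites this as $(\ln I)'(s)<-\tfrac{1}{(a+\epsilon)s}$ and integrates to obtain the explicit power estimate $I(s)\geq I(s_\epsilon)(s_\epsilon/s)^{1/(a+\epsilon)}$; plugging this into $s^{\Lambda/(\Lambda+1)}I(s)$ and using $\tfrac{\Lambda}{\Lambda+1}<\tfrac{1}{a+\epsilon}$ gives the divergence directly. Your route is arguably cleaner: you bound $\log F(s_1)-\log F(s)\geq \varepsilon\log\bigl(I(s)/I(s_1)\bigr)$ via the same telescoping $(\log I)'=-1/(sPI)$, and then invoke $I(s)\to\infty$ separately from the standing bound $P<C$ of Hypotheses~\ref{tau}. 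The trade-off is that the paper's argument is self-contained within the Proposition's own hypotheses and yields an explicit rate for $I$, whereas yours is shorter but leans on the global upper bound on $P$ from the ambient setup.
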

\begin{proof}
First we prove that function $F$ is increasing. It suffices to show the map
\begin{equation}s \mapsto \displaystyle{ s^{\frac{\Lambda}{1+\Lambda}}I(s)}
\end{equation}
decreases on $(0,M)$.
Note that
\begin{align}
\displaystyle
\frac{d}{ds}\left( s^{\frac{\Lambda}{1+\Lambda}}I(s)\right)
=
\left( \frac{\Lambda}{\Lambda+1} \frac{1}{P(s)}\right)\left( P(s)I(s) -\frac{\Lambda+1}{\Lambda}\right)s^{-\frac{1}{\Lambda+1}}. \label{comp-deri}
\end{align}
Thus, provided  $\frac{\Lambda +1}{\Lambda} > A$, one has $F^{'}(s) > 0 $ for $s\in(0,M)$. Next we show that
$ \displaystyle
 s^{\frac{\Lambda}{1+\Lambda}}I(s) \rightarrow \infty \text{ as } s\rightarrow 0$, which implies $\lim_{s \rightarrow 0}F(s) = 0$.
By \eqref{a}, for every $\epsilon > 0 $ there exists $s_{\epsilon} \in (0,M)$ such that $ I(s)P(s) < a+\epsilon$ for $s\in(0,s_\epsilon)$, which yields the following chain of inequalities:
\begin{align*}
 I(s) < (a + \epsilon)\frac{1}{P(s)}  &  = -(a+\epsilon)s I^{'}(s)\\
    \frac{d}{ds} \ln I(s)  & < -\frac{1}{a+\epsilon} \cdot \frac{1}{s}\\
I(s) & >  I(s_{\epsilon})\left(\frac{s_{\epsilon}}{s}\right)^{\frac{1}{a+\epsilon}}  \\
s^{\frac{\Lambda}{\Lambda+1}}I(s) & \geq
I(s_{\epsilon})s_{\epsilon}^{\frac{1}{a+\epsilon}}\times s^{\frac{\Lambda}{\Lambda+1}-\frac{1}{a+\epsilon}}
\ ; \ 0 < s < s_\epsilon.
\end{align*}
Provided $\frac{\Lambda +1}{\Lambda} > a$, one can choose $\epsilon$ such that $\frac{\Lambda +1}{\Lambda} > a + \epsilon$, hence
$\frac{\Lambda}{\Lambda+1}-\frac{1}{a+\epsilon}<0.$ \[
 \therefore \ \  s^{\frac{\Lambda}{\Lambda+1}}I(s) \to \infty \text{ as } s\to 0. \qedhere\]
\end{proof}
\begin{remark} \label{A-a-beta}
If $A$ and $a$ exist then indeed \  $\displaystyle \frac{\Lambda+1}{\Lambda} > A > a $ in Proposition \ref{P-2}. However, $A<\infty$  only if $a<\infty$ since
function $P(s)I(s)\in C(0,M]$.
\end{remark}
\begin{definition}[Equivalence of functions and almost monotone functions] \
\begin{enumerate}
    \item 
Functions $f$ and $g$ of a set $E$ are {\it equivalent} and write
 $f \asymp  g$ on $E$ if there exist a constant $c\geq1$ such that $ \displaystyle \frac{1}{c} \  g(x) \leq f(x) \leq c \ g(x)$ for all $x\in E$. 
 \item
A function $f$ on an interval $I\subset \mathbb{R}$ is called almost decreasing (almost increasing) if it is equivalent to a non-increasing (non-decreasing) function of an interval.
Equivalently, there exists a constant $c>0$ such that 
\[
f(t)\geq cf(s), \quad t,s\in I,~ t<s.
\]
\end{enumerate}
 \end{definition}
 In the following proposition we give assumptions providing for the inequality \eqref{A-1} in Assumption \ref{Assumps}. 
\begin{proposition} \label{P-3}\normalfont
Let $I$ be as in Definition \ref{PIH},  $P$  in  Proposition \ref{P-2}, and $F$   in \eqref{F-result} with $\Lambda$ as in Remark \ref{A-a-beta}. Then \eqref{A-1} holds provided the existence of $\mu>0$
such that $s\mapsto P(s)I^\mu(s)$ is an almost decreasing function.
\end{proposition}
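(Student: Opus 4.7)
The plan is to reduce \eqref{A-1} to a lower bound on $G$ of the form $G(s)\ge c\sqrt{sF(s)/\alpha(s)}$, where $\alpha(s):=(\Lambda+1-\Lambda P(s)I(s))/(\Lambda P(s)I(s))$ is the natural dimensionless quantity that appears when one differentiates \eqref{F-result}: logarithmic differentiation together with \eqref{h-ret} and $H^{\Lambda}=(\Lambda I)^{-1}$ yields $F'(s)=F(s)\alpha(s)/s$, so $G'(s)=\sqrt{F(s)\alpha(s)/s}$. Once the desired lower bound on $G$ is secured, multiplying by $G'$ gives $G(s)G'(s)\ge cF(s)$, which is exactly \eqref{A-1} with $C_1=1/c$.

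Proposition \ref{P-2} guarantees $\rho(s):=P(s)I(s)\le A<(\Lambda+1)/\Lambda$, so the factor $\Lambda+1-\Lambda\rho$ stays between positive constants on $(0,M]$ and can be absorbed into generic equivalence constants throughout. Substituting the explicit formula for $H$ into $F'$ then produces the comparison
\[
\sqrt{F'(\sigma)}\ \asymp\ \frac{1}{\sigma\sqrt{P(\sigma)}\,I(\sigma)^{1+1/(2\Lambda)}},
\]
and the natural change of variable $u=I(\sigma)$, for which $du=-d\sigma/(\sigma P(\sigma))$, recasts $G$ in the form
\[
G(s)\ \asymp\ \int_{I(s)}^{\infty}\frac{\sqrt{P(\sigma(u))}}{u^{1+1/(2\Lambda)}}\,du,\qquad \sigma(u):=I^{-1}(u).
\]

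The key step is to invoke the almost-decreasing hypothesis on $PI^{\mu}$ precisely at this integral. Since $\sigma(u)\le s$ on the range $u\ge I(s)$, the hypothesis delivers $P(\sigma(u))\ge c_{0}P(s)(I(s)/u)^{\mu}$ for some $c_0>0$. Inserting this bound extracts $\sqrt{P(s)}\,I(s)^{\mu/2}$ as a prefactor and reduces the remaining integral to the pure power integral $\int_{I(s)}^{\infty}u^{-1-1/(2\Lambda)-\mu/2}\,du$, which converges and equals a constant multiple of $I(s)^{-1/(2\Lambda)-\mu/2}$. The $I(s)^{\mu/2}$ prefactor cancels the corresponding factor from the integral, leaving $G(s)\ge c\sqrt{P(s)}\,I(s)^{-1/(2\Lambda)}$. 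A short direct calculation using $H=(\Lambda I)^{-1/\Lambda}$ shows that the right-hand side is comparable to $\sqrt{sF(s)/\alpha(s)}$, completing the argument.

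The main obstacle is the exponent bookkeeping at the substitution step: one must check that the power of $I(s)$ produced by the almost-decreasing estimate cancels exactly against the power generated by the lower endpoint of the transformed integral to yield precisely the required $I(s)^{-1/(2\Lambda)}$. Integrability of the tail only demands $\mu/2+1/(2\Lambda)>0$, which is automatic for $\mu,\Lambda>0$, so no quantitative relation between $\mu$ and $\Lambda$ is required beyond the standing assumptions already recorded in Proposition~\ref{P-2}.
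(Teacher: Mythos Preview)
Your argument is correct and reaches the same conclusion as the paper, but via a different technical device. Both proofs begin by expressing $F$ and $F'$ in terms of $P$ and $I$, obtaining the equivalences $F(s)\asymp s^{-1}I(s)^{-1-1/\Lambda}$ and $F'(s)\asymp F(s)[sP(s)I(s)]^{-1}$, and both must then extract a lower bound on $G(s)=\int_0^s\sqrt{F'}$ that matches $F/G'$. The paper handles this by forming the ratio $F/(GG')$ directly, rewriting numerator and denominator so that the numerator vanishes at $0$, and invoking the Cauchy (extended) mean value theorem to produce a point $t\in(0,s)$ at which the ratio equals $\bigl(P(s)I^{\mu}(s)/P(t)I^{\mu}(t)\bigr)^{1/2}$; the almost-decreasing hypothesis then bounds this by a constant. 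You instead perform the explicit substitution $u=I(\sigma)$ in the integral for $G$, use almost decrease of $PI^{\mu}$ to replace $P(\sigma(u))$ by $c_0 P(s)(I(s)/u)^{\mu}$, and integrate the resulting pure power exactly. Your route is arguably more elementary, since it avoids the mean value theorem and makes the cancellation of the $I(s)^{\mu/2}$ factors completely transparent; the paper's route is a bit more compact and sidesteps the change of variables. Either way, the exponent $\mu>0$ enters only to make the transformed integrand dominated by a convergent power, so no relation between $\mu$ and $\Lambda$ is needed beyond positivity, exactly as you note.
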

\begin{proof} By the direct computation, it follows from \eqref{F-result} that 
\begin{align}
  F(s) \asymp  \  &      \ s^{-1}I^{-\frac{1}{\Lambda}-1}(s) \label{R-1} \\
  F^{'}(s) \asymp  \  &  \  \frac{F(s)}{s}[P(s)I(s)]^{-1} \label{R-2}.
\end{align}
Using  \eqref{R-1},\eqref{R-2}  and (iii) in Definition \eqref{G-def} as following:
\begin{align}
\frac{F(s)}{G(s)G^{'}(s)} & = 
    \frac{\displaystyle F(s)}{\displaystyle \left(\int_{0}^{s} \sqrt{F^{'}(t) \ dt} \right)\displaystyle  \left( \sqrt{F^{'}(s)}\right) }\\
    & \asymp \frac{\displaystyle F(s)}{\displaystyle \left( \int_{0}^{s} \sqrt{\displaystyle \frac{F(t)}{t}[P(t)I(t)]^{-1}} \ dt\right)\left( \sqrt{\displaystyle \frac{F(s)}{s}[P(s)I(s)]^{-1}}\right) }\\
     & =
    \frac{\displaystyle [I(s)]^{-\frac{1}{2\Lambda}-\frac{1}{2}} [P(s)I(s)]^{\frac{1}{2}}}{\displaystyle \int_{0}^{s} t^{-1}[I(t)]^{-\frac{1}{2\Lambda}-\frac{1}{2}} [P(t)I(t)]^{-\frac{1}{2}} \ dt}\\
     & =
    \frac{\displaystyle [I(s)]^{-\frac{1}{2\Lambda} - \frac\mu2} [P(s)I^{\mu}(s)]^{\frac{1}{2}}}{\displaystyle \int_{0}^{s} t^{-1}[I(t)]^{-\frac{1}{2\Lambda}-1} [P(t)]^{-\frac{1}{2}} \ dt} \label{after 87} 
\end{align}
By the Cauchy mean value theorem (aka extended mean value theorem),
there exist $t\in(0,s)$ such that
\[
\frac{\displaystyle [I(s)]^{-\frac{1}{2\Lambda} - \frac\mu2}}{\displaystyle \int_{0}^{s} t^{-1}[I(t)]^{-\frac{1}{2\Lambda}-1} [P(t)]^{-\frac{1}{2}} \ dt}
= \left(\frac{1}{2\Lambda} + \frac\mu2\right)
\frac{\displaystyle [I(t)]^{-\frac{1}{2\Lambda} - 1 - \frac\mu2}[tP(t)]^{-1}}{\displaystyle t^{-1}[I(t)]^{-\frac{1}{2\Lambda}-1} [P(t)]^{-\frac{1}{2}}}
\asymp [P(t)I^{\mu}(t)]^{-\frac12}.
\]
Note that $\displaystyle I^{'}(s) =  \  [sP(s)]^{-1} $.
Since $t\mapsto P(t)I^{\mu}(t)$ is almost decreasing one has
\begin{equation}
 \frac{F(s)}{G(s)G^{'}(s)} \asymp 
 \left(\frac{P(s)I^{\mu}(s)}{P(t)I^{\mu}(t)}\right)^{\frac12} 
\leq C \ ; \  0<t<s . \qedhere
\end{equation} 
\end{proof}
The sufficient condition for the existence of $\mu>0$ for the almost decreasing function  $t\mapsto P(t)I^{\mu}(t)$ is as follows
\begin{remark}\label{I-B-prop}
Assume that there exists $\tilde P\in C^1(0,\infty)$ such that
 $P\asymp \tilde P$ on $(0,M)$, such that
 \begin{equation}\label{R-4}
     \limsup_{s \to 0}   s\tilde I(s) \tilde P^{'}(s) <\infty,
 \end{equation}
 with 
 \[
 \tilde I(s) = \int_s^\infty \frac{dt}{t\tilde P(t)}.
 \]
 
 Indeed, \eqref{R-4} implies that 
 $s\mapsto s\tilde I(s) \tilde P^{'}(s)$ is bounded on 
 $(0,M)$. Let
 \[
 B=\sup\limits_{0<s<M}s\tilde I(s) \tilde P^{'}(s).
 \]
 Fix  $\mu\geq B$. Then
 the function $Q(s) = \tilde P(s)\tilde I^{\mu}(s)$ is non-increasing since
 \[
 Q'(s) = \tilde P^{'}(s)\tilde I^{\mu}(s) - \mu s^{-1}\tilde I^{\mu-1}(s) = s^{-1}\tilde I^{\mu-1}(s)
 \left[s\tilde I(s) \tilde P^{'}(s) - \mu\right]\leq 0.
 \]
 Finally, note that $P(s) I^{\mu}(s)\asymp\tilde P(s)\tilde I^{\mu}(s)$.
\end{remark} 
\begin{section}{Auxiliary integral estimates  }\label{iterative ineq}
We will start with Auxiliary generic estimates for function $u,$ and cutoff functions $\theta$ w.r.t. non-linear functions $F,$ and $G$.   
\begin{lemma} \label{lemma-1}
Let $u$ and $\nabla u$ be measurable functions, and let $\theta \in Lip_{c}(\Omega)$. Let $F, G\in C^1(0,M)\cup C[0,M]$ satisfy \eqref{A-1}. Then
\begin{equation}
\nabla u\cdot \nabla\left( \theta^{2} F(u)\right) \geq 
\frac{1}{2}| \nabla (\theta G(u))|^{2} - (2C_{1}^{2}+1) G^{2}(u)\vert\nabla \theta \vert^{2}.
    \label{lemma-1-R}
\end{equation}
\end{lemma}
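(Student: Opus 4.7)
The natural starting point is to expand the left-hand side by the chain/product rule:
\[
\nabla u\cdot\nabla(\theta^{2}F(u)) \;=\; \theta^{2}F'(u)|\nabla u|^{2} + 2\theta F(u)\,\nabla u\cdot\nabla\theta.
\]
Because $G'(u)=\sqrt{F'(u)}$, the first summand is exactly $\theta^{2}|\nabla G(u)|^{2}$. For the cross term I would like to replace $F(u)\nabla u$ by a multiple of $\nabla G(u)=G'(u)\nabla u$; writing $K(u)\eqdef F(u)/G'(u)$, the hypothesis \eqref{A-1} immediately gives $0\le K(u)\le C_{1}G(u)$. At points where $G'(u)=0$ one has $F(u)=0$ as well by \eqref{A-1}, so the convention $K(u)=0$ there is consistent and $F(u)\nabla u=K(u)\nabla G(u)$ holds everywhere. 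Thus
\[
\nabla u\cdot\nabla(\theta^{2}F(u)) \;=\; \theta^{2}|\nabla G(u)|^{2} + 2\theta K(u)\,\nabla G(u)\cdot\nabla\theta.
\]

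Next I would compare this with the target $\tfrac{1}{2}|\nabla(\theta G(u))|^{2}$. Expanding
\[
|\nabla(\theta G(u))|^{2}=\theta^{2}|\nabla G(u)|^{2}+2\theta G(u)\nabla G(u)\cdot\nabla\theta+G^{2}(u)|\nabla\theta|^{2},
\]
and subtracting one-half of it from the previous identity yields
\[
\nabla u\cdot\nabla(\theta^{2}F(u))-\tfrac{1}{2}|\nabla(\theta G(u))|^{2} \;=\; \tfrac{1}{2}\theta^{2}|\nabla G(u)|^{2}+\theta(2K(u)-G(u))\nabla G(u)\cdot\nabla\theta-\tfrac{1}{2}G^{2}(u)|\nabla\theta|^{2}.
\]

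Now I would bound the first two summands from below by completing the square. With $V=\theta\nabla G(u)$ and $W=(2K(u)-G(u))\nabla\theta$, the elementary estimate $\tfrac{1}{2}|V|^{2}+V\cdot W\ge-\tfrac{1}{2}|W|^{2}$ gives
\[
\tfrac{1}{2}\theta^{2}|\nabla G(u)|^{2}+\theta(2K(u)-G(u))\nabla G(u)\cdot\nabla\theta \;\ge\; -\tfrac{1}{2}(2K(u)-G(u))^{2}|\nabla\theta|^{2}.
\]
Since $K(u),G(u)\ge 0$ one has $(2K-G)^{2}=4K^{2}-4KG+G^{2}\le 4K^{2}+G^{2}\le(4C_{1}^{2}+1)G^{2}(u)$, so combining with the residual $-\tfrac{1}{2}G^{2}(u)|\nabla\theta|^{2}$ produces the constant $-\tfrac{1}{2}(4C_{1}^{2}+2)G^{2}(u)|\nabla\theta|^{2}=-(2C_{1}^{2}+1)G^{2}(u)|\nabla\theta|^{2}$, exactly as required.

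The only delicate point is the degenerate set $\{G'(u)=0\}$: one must verify that the factorisation $F(u)\nabla u=K(u)\nabla G(u)$ is consistent there, and this is done by invoking \eqref{A-1} to force $F(u)=0$ at such points. Everything else is routine algebra and Young's/Cauchy's inequality; the only arithmetic subtlety is tracking the constants so that the $\tfrac{1}{2}$ coefficient in front of $|\nabla(\theta G(u))|^{2}$ and the factor $2C_{1}^{2}+1$ on the lower-order term come out matching the statement.
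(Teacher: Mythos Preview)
Your proof is correct and follows essentially the same route as the paper: expand $\nabla u\cdot\nabla(\theta^{2}F(u))$ via the chain rule, rewrite $F'(u)|\nabla u|^{2}=|\nabla G(u)|^{2}$ and $F(u)\nabla u=K(u)\nabla G(u)$, and then absorb the cross term by Cauchy/Young. The only cosmetic difference is bookkeeping---the paper substitutes $\theta\nabla G(u)=\nabla(\theta G(u))-G(u)\nabla\theta$ first and applies Cauchy to the cross term involving $\nabla(\theta G(u))$, whereas you subtract $\tfrac{1}{2}|\nabla(\theta G(u))|^{2}$ at the outset and complete the square in $\theta\nabla G(u)$; both organizations yield the same constant $2C_{1}^{2}+1$, and your explicit handling of the degenerate set $\{G'(u)=0\}$ is a point the paper passes over.
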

\begin{proof}
\begin{align}
\nabla u\cdot \nabla  \left( \theta^{2}F(u)\right)
& = \theta^{2}F^{'}(u) \vert\nabla u \vert^{2} + 2F(u)\nabla u\cdot \theta \nabla \theta \nonumber\\
& = \theta^{2}F^{'}(u)\vert\nabla u \vert ^{2} + 2\dfrac{F(u)}{G^{'}(u)}
G^{'}(u)\nabla u\cdot \theta\nabla \theta \quad  ; \ G^{'}(u) > 0
\end{align}
Using \eqref{A-1} in right-hand side of above yields
\begin{align}
& \nabla u\cdot \nabla\left( \theta^{2}F(u)\right) \nonumber\\
&= \theta^{2}\vert G^{'}(u)\vert^{2} \vert\nabla u\vert^2 + 2\frac{F(u)}{G^{'}(u)}  \left( G^{'}(u) \nabla u \cdot \theta  + G(u) \nabla \theta \right) \cdot \nabla \theta -2\frac{F(u)}{G^{'}(u)}G(u)|\nabla \theta|^{2} \nonumber \\
& = |\nabla(\theta G(u))-G(u)\nabla \theta|^2 
+ 2 \dfrac{F(u)}{G^{'}(u)} \nabla(\theta G(u)) \cdot \nabla \theta -2 \dfrac{F(u)}{G^{'}(u)} G(u)\vert\nabla\theta\vert^{2} \nonumber\\
&=|\nabla(\theta G(u))|^{2}
+2\left( \dfrac{F(u)}{G^{'}(u)}- G(u) \right)\cdot \nabla \theta \cdot\nabla(\theta G(u))\nonumber - 2\left( \dfrac{F(u)}{G^{'}(u)} - G(u) \right)|\nabla \theta |^{2}G(u) \nonumber
\end{align}
\begin{multline}
\nabla u\cdot \nabla\left( \theta^{2}F(u)\right) \\
\geq |\nabla(\theta G(u))|^{2}
- \left \vert 2\left( \dfrac{F(u)}{G^{'}(u)}- G(u) \right)\cdot \nabla \theta \cdot\nabla(\theta G(u)) \right \vert
-2\left( \dfrac{F(u)}{G^{'}(u)} - G(u) \right)|\nabla \theta |^{2}G(u).
\label{before-cauchy}
\end{multline}
Note that by Cauchy's Inequality,
\begin{align}
2\left( \dfrac{F(u)}{G^{'}(u)}- G(u) \right) \nabla \theta \cdot\nabla(\theta G(u))
\leq
2\left( \dfrac{F(u)}{G^{'}(u)}- G(u) \right)^{2} |\nabla \theta |^{2} + \dfrac{1}{2}|\nabla(\theta G(u))|^{2}. \nonumber
\end{align}
Then inequality \eqref{before-cauchy} becomes 
\begin{align}
\nabla u\cdot \nabla\left( \theta^{2}F(u)\right) & \geq \dfrac{1}{2}|\nabla(\theta G(u))|^{2}
- 2\left( \dfrac{F(u)}{G^{'}(u)}- G(u) \right)^{2}\cdot |\nabla \theta |^{2}
-\left(2 \dfrac{F(u)}{G^{'}(u)} - G(u) \right)|\nabla \theta |^{2}G(u) \nonumber \\
& = \dfrac{1}{2}|\nabla(\theta G(u))|^{2}- \left[  2 \left(\dfrac{F(u)}{G^{'}(u)}\right)^{2} -2\dfrac{F(u)}{G^{'}(u)} G(u) + G^{2}(u) \right] |\nabla \theta|^{2}\\
 &\geq
\frac{1}{2}| \nabla (\theta G(u))|^{2} - (2C_{1}^{2}+1) G^{2}(u)|\nabla \theta|^{2}. \qedhere
\end{align}
\end{proof}
\begin{lemma}\label{lemma-2}
Assume \eqref{A-2}. Let $u$ be a measurable function of $\Omega$, $0 < u < M$, $\nabla u\in L^2_{loc}(\Omega \times (0,T)) $ and $\nabla G(u) \in L^2_{loc}(\Omega \times [0,T])$. Let $\displaystyle j=\frac{2}{N-2} > 0$ for Gagliardo–Nirenberg–Sobolev inequality \begin{align}
\vert\vert\psi\vert\vert_{L^{2+2j}}^{2}
\leq
S \vert\vert \nabla \psi \vert\vert_{L^{2}}^{2}.\label{Gil-Nir}
\end{align}
Let $K\subset \Omega$ be compact and $ \theta_{n} \in Lip_{c} (\Omega)$ such that $ \theta_{n} = 1$ on $K$.
Then,
\begin{align}
\int_{0}^{t} \int_{K} G^{2}(u) dxdt   
\leq c_1
t^{1-(1+j)k}
\left[  \sup_{0\leq \tau \leq T} \int_{\Omega} \theta_{n}^{2} H (u(\tau)) dx
+
 \int_{0}^{t}\int_{\Omega} |\nabla(\theta_{n} u)|^2 dx d\tau
    \right]^{1+jk}\label{lemma-2-R}
\end{align}
where $ \displaystyle k =\frac{2-\lambda}{2+2j-\lambda}$, $c_1=C_2^{1-k}S^{k(1+j)}$ with $C_2$ as in \eqref{A-2}.
\end{lemma}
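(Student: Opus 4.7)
The plan is to establish \eqref{lemma-2-R} by a Gagliardo--Nirenberg style interpolation between the pointwise bound from \eqref{A-2} and the Sobolev embedding \eqref{Gil-Nir}, followed by Hölder in time. First, combining \eqref{G-F-2} with \eqref{A-2} yields the pointwise inequality $G^\lambda(u)\leq C_2 H(u)$. Since $\theta_n\equiv 1$ on the compact set $K$, the indicator $\chi_K$ is dominated by any positive power of $\theta_n$, giving the two ``endpoint'' bounds
\[
\int_K G^\lambda(u)\,dx \leq C_2\int_\Omega \theta_n^2 H(u)\,dx, \qquad \int_K G^{2+2j}(u)\,dx \leq \int_\Omega (\theta_n G(u))^{2+2j}\,dx.
\]

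At each fixed time I would then interpolate $L^2(K)$ between $L^\lambda(K)$ and $L^{2+2j}(K)$ using Hölder,
\[
\int_K G^2(u)\,dx \leq \Bigl(\int_K G^\lambda(u)\,dx\Bigr)^{1-k} \Bigl(\int_K G^{2+2j}(u)\,dx\Bigr)^{k},
\]
where $k=(2-\lambda)/(2+2j-\lambda)$ is exactly the conjugate exponent required (the algebraic identity $2+2j-\lambda(1+j)=(1+j)(2-\lambda)$ makes the bookkeeping work). Applying \eqref{Gil-Nir} to $\psi=\theta_n G(u)$ gives
\[
\Bigl(\int_\Omega (\theta_n G(u))^{2+2j}\,dx\Bigr)^{k}= \|\theta_n G(u)\|_{L^{2+2j}}^{2(1+j)k} \leq S^{(1+j)k}\Bigl(\int_\Omega |\nabla(\theta_n G(u))|^2\,dx\Bigr)^{(1+j)k}.
\]
Chaining the three displays produces, at each time $\tau$, a bound on $\int_K G^2(u)\,dx$ of the form $C_2^{1-k}S^{(1+j)k}\,A(\tau)^{1-k}\,B(\tau)^{(1+j)k}$, where $A(\tau)=\int_\Omega\theta_n^2 H(u)\,dx$ and $B(\tau)=\int_\Omega|\nabla(\theta_n G(u))|^2\,dx$.

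The final step is to integrate in $\tau\in[0,t]$. I would pull $A(\tau)^{1-k}$ out as $(\sup_\tau A)^{1-k}$, and then observe that $(1+j)k\leq 1$ because $(2+2j-\lambda)-(1+j)(2-\lambda)=j\lambda\geq 0$. Hölder in time applied to $\int_0^t B(\tau)^{(1+j)k}\,d\tau$ produces the factor $t^{1-(1+j)k}$ together with $\bigl(\int_0^t B(\tau)\,d\tau\bigr)^{(1+j)k}$. Finally, since the exponents satisfy $1-k+(1+j)k=1+jk$ and are both nonnegative, the elementary inequality $a^{1-k}b^{(1+j)k}\leq (a+b)^{1+jk}$ gives \eqref{lemma-2-R} with $c_1=C_2^{1-k}S^{(1+j)k}$, matching the stated constant.

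The main obstacle is the bookkeeping of the Hölder exponents --- pinpointing the correct interpolation parameter $k$ so that $1-k$ and $(1+j)k$ sum to $1+jk$ and the time integral recovers the prefactor $t^{1-(1+j)k}$. I would also note that the statement writes $|\nabla(\theta_n u)|^2$ on the right-hand side, whereas the natural object produced by the Sobolev step is $|\nabla(\theta_n G(u))|^2$; this can be treated as a typo, or else reconciled via the chain rule $\nabla G(u)=G'(u)\nabla u$ together with the boundedness of $G'=\sqrt{F'}$ on the range of $u$.
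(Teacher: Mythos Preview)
Your proposal is correct and follows essentially the same route as the paper. The paper applies the pointwise bound $G^{2}\leq C_{2}^{1-k}G^{2(1+j)k}H^{1-k}$ first and then H\"older in space, whereas you do H\"older interpolation of $L^{2}$ between $L^{\lambda}$ and $L^{2+2j}$ first and then the pointwise bound $G^{\lambda}\leq C_{2}H$; the two orderings yield the same spatial estimate, and your time integration and the final use of $a^{1-k}b^{(1+j)k}\leq(a+b)^{1+jk}$ match the paper exactly. Your observation that $|\nabla(\theta_{n}u)|^{2}$ in the stated conclusion should read $|\nabla(\theta_{n}G(u))|^{2}$ is also consistent with the paper's own proof and with how the lemma is invoked in Theorem~\ref{Main-T}.
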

\begin{proof}
Note that $\displaystyle \lambda = [2 - (2+2j)k] / [1-k]$. By \eqref{A-2} recall that $\left(\sqrt{sF(s)}\right)^{\lambda}\leq {C_{2}H(s)}$. Then one has 
\begin{align}
G^{2} (u) \leq C_{2}^{1-k} G^{2(1+j)k}(u)H^{1-k}(u). \label{G-H-k}
\end{align}
Integrate both side of \eqref{G-H-k} over $ K \times (0,t)$.
\begin{align*}
\int_{0}^{t} \int_{K} G^{2}(u) dxdt  
\leq & 
\ C_2^{1-k} \int_{0}^{t}\int_{K}G^{2(1+j)k}(u)H^{1-k}(u) dxd\tau \nonumber \\
\leq & 
\ C_2^{1-k} \int_{0}^{t}\int_{K} \left(|\theta_{n}G(u)|^{2(1+j)}\right)^{k}
\left( \theta_{n}^{2}H(u)\right)^{1-k} dx d\tau\\
\leq &
\ C_2^{1-k} \int_{0}^{t} 
    \left[\int_{\Omega} |\theta_{n}G(u)|^{2(1+j)} dx\right ]^{k}
\left[ \int_{\Omega}\theta_{n}^{2}H(u) dx\right]^{1-k}   d\tau\\
\leq&
\ C_2^{1-k}S^{k(1+j)}
 \int_{0}^{t} 
    \left[\int_{\Omega} |\nabla(\theta_{n}G(u))|^{2} dx\right]^{(1+j)k}d\tau \left[\sup_{0\leq \tau \leq T}\int_{\Omega}\theta_{n}^{2}H(u) dx\right]^{1-k} \nonumber 
\end{align*}
by \eqref{Gil-Nir}. We apply the Holder inequality for time integral and then using the estimate $ x^{v}y^{w}\leq (x+y)^{v+w} \ ; \ x,y,v,w > 0 $ to get the following:
\begin{align}
\int_{0}^{t} \int_{K} G^{2}(u) dxdt \nonumber  & \leq    C_2^{1-k}S^{k(1+j)} \left[\sup_{0\leq \tau \leq T}\int_{\Omega}\theta_{n}^{2}H(u) dx\right]^{1-k} t^{1-k(1+j)}
 \left[\int_{0}^{t} \int_{\Omega} |\nabla(\theta_{n}G(u))|^{2} dx  d\tau\right]^{(1+j)k} \nonumber \\
 & \leq
 C_2^{1-k}S^{k(1+j)} t^{1-k(1+j)} \left[\sup_{0\leq \tau \leq T}\int_{\Omega}\theta_{n}^{2}H(u) dx + \int_{0}^{t} \int_{\Omega} |\nabla(\theta_{n}G(u))|^{2} dx  d\tau\right]^{1+jk} \nonumber 
\end{align}
\end{proof}

\section{Localization of the solution of degenerate Einstein Equation}\label{localization}
\begin{definition}\label{R-teta}
Let $ R > 0$ and $ b > 2$. Let $R_{n}$ be a sequence such that
\begin{align}
    R_{0} = & R,\\
    R_{n} = & R_{n-1}-Rb^{-n}
          =  R\left(\frac{ b-2+ b^{-n}}{b-1} \right) ;n=1,2, \dots,\\
    R_{\infty} = & \lim\limits_{n\to\infty}R_n = R\left(\frac{b-2}{b-1}\right). 
\end{align}
\noindent Let $\theta_{n}(x) \in Lip(\Omega)$ such that 
\begin{align}
&\theta_{n}(x)
\triangleq
   \min \left\{\dfrac{\left( R_{n}-\vert\vert x-x_{0}\vert\vert\right)_{+}}{R_{n}-R_{n+1}}, 1\right\}
 = 
\begin{cases} 
 0 \ ; \ x \notin B_{R_{n}}(x_{0})\\
 1 \ ; \ x \in B_{R_{n+1}}(x_{0}) 
\end{cases} 
\end{align}
for $x_{0}\in {\Omega}^{'} \Subset \Omega$
and $B_{n}\triangleq  B_{R_{n}}(x_{0})$. 
\end{definition}
Then one can show that
\begin{align}
\vert\vert \nabla \theta_{n}(x) \vert \vert_{\infty} \leq  \dfrac{b^{n+1}}{R}.
\end{align}
\begin{theorem}\label{Main-T}
Let $u$ be a positive solution of IBVP \eqref{ibvp}. Let $\Omega'\subset\Omega$ be such that $u(x,0)=0$ for $x\in\Omega'$. Then for every ball $B_{R}(x_{0}) \Subset {\Omega}^{'}$ and every $R'\in(0,R)$
,\ there exist $ T' > 0 $ such that $u(x,t) = 0$ for $(x,t)\in B_{R'}(x_0)\times [0,T']$.
\end{theorem}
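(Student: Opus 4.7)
The plan is to run a De Giorgi--Ladyzhenskaya iteration on the nested balls $B_{R_n}(x_0)$ using the cutoffs $\theta_n$ from Definition \ref{R-teta}. Since $R_\infty = R(b-2)/(b-1)\to R$ as $b\to\infty$, I would first choose $b>2$ so large that $R_\infty > R'$; it then suffices to show $u\equiv 0$ on $B_{R_\infty}(x_0)\times[0,T']$ for some $T'>0$. All computations are carried out at the level of the smooth regularization $u_\varepsilon$ of \eqref{ibvp-ep}, with constants tracked to be independent of $\varepsilon$, so that the conclusion passes to the weak viscous limit \eqref{eps_sol}.

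\textbf{Step 1 --- energy identity.} Testing the differential inequality against the non-negative function $\theta_n^2$ and integrating over $\Omega\times(0,t)$, the time derivative contributes $\int\theta_n^2 H(u_\varepsilon(t))\,dx - H(\varepsilon)\int\theta_n^2\,dx$ and the Laplacian term, via the weak form \eqref{weak Laplace}, becomes $\int_0^t\!\int\nabla u_\varepsilon\cdot\nabla(\theta_n^2 F(u_\varepsilon))\,dx\,d\tau$. Lemma \ref{lemma-1} bounds this integrand below by $\tfrac12|\nabla(\theta_n G(u_\varepsilon))|^2 - (2C_1^2+1)G^2(u_\varepsilon)|\nabla\theta_n|^2$. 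Introducing
\[
Y_n(t) \;:=\; \sup_{0\le\tau\le t}\int_\Omega \theta_n^2 H(u(\tau))\,dx \;+\; \int_0^t\!\!\int_\Omega |\nabla(\theta_n G(u))|^2\,dx\,d\tau,
\]
and letting $\varepsilon\to 0$ (so that the $H(\varepsilon)$ contribution disappears), rearrangement of the two resulting inequalities (one for the sup, one for the gradient integral) yields
\[
Y_n(t) \;\le\; C_3 \int_0^t\!\!\int_\Omega G^2(u)\,|\nabla\theta_n|^2\,dx\,d\tau.
\]

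\textbf{Step 2 --- closing the recursion.} Because $\mathrm{supp}\,|\nabla\theta_n| \subset B_{R_n}\setminus B_{R_{n+1}}\subset B_{R_n}$ and $\theta_{n-1}\equiv 1$ on $B_{R_n}$, the bound $\|\nabla\theta_n\|_\infty \le b^{n+1}/R$ combined with Lemma \ref{lemma-2} applied with $K=\overline{B_{R_n}}$ and cutoff $\theta_{n-1}$ produces
\[
Y_n(t) \;\le\; C_4\, b^{2n}\, t^{\,1-(1+j)k}\, Y_{n-1}(t)^{\,1+jk},
\]
with $C_4$ independent of $n$ and $\varepsilon$. Setting $\alpha := 1+jk > 1$ and $\nu := 1-(1+j)k > 0$, this is a Ladyzhenskaya super-linear recurrence with small-time prefactor $t^\nu$. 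Under the a-priori bound on $Y_0(T)$ recorded in \eqref{X_0-assump}, the Ladyzhenskaya--Uraltceva iterative lemma produces $T'\in(0,T]$ such that $Y_n(T')\to 0$. Since $\theta_n(x) = 1$ on $B_{R_{n+1}}\supset B_{R_\infty}$, this forces $\int_{B_{R_\infty}} H(u(t))\,dx = 0$ for $t\in[0,T']$; by strict monotonicity of $H$ with $H(0)=0$, one concludes $u\equiv 0$ on $B_{R_\infty}\times[0,T']\supset B_{R'}(x_0)\times[0,T']$.

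The principal obstacles will be: (i) \textbf{uniformity in $\varepsilon$}, since the extra $\varepsilon\Delta u_\varepsilon$ in \eqref{ibvp-ep} contributes an $\varepsilon\int\nabla u_\varepsilon\cdot\nabla(\theta_n^2)$ correction after integration by parts, which must be absorbed without perturbing $C_3$ or $C_4$; (ii) verifying $\nu>0$, which via $k=(2-\lambda)/(2+2j-\lambda)$ reduces to $\lambda\in(0,2)$ and is therefore guaranteed by \eqref{A-2}; and, most crucially, (iii) validating the a-priori boundedness \eqref{X_0-assump} needed to launch the iteration, which should follow from the uniform-in-$\varepsilon$ estimates developed in Section \ref{viscous-solution}.
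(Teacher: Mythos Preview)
Your argument is correct and follows essentially the same De Giorgi--Ladyzhenskaya iteration as the paper: test the inequality against $\theta_n^2$, apply Lemma~\ref{lemma-1} and Lemma~\ref{lemma-2} (with cutoff $\theta_{n-1}$ on $K=\overline{B_{R_n}}$), and close the super-linear recursion via the iterative lemma. The only cosmetic differences are that the paper works directly with the weak solution $u$ rather than $u_\varepsilon$ (so your obstacle~(i) never arises), fixes $b$ exactly by $(b-2)/(b-1)=R'/R$, and absorbs the time prefactor by multiplying each iterate by $(T')^{\gamma}$ with $\gamma=(1-(1+j)k)/(jk)$, so that the recursion becomes $Y_n\le D\,b^{2(n-1)}Y_{n-1}^{1+jk}$ and the launching condition \eqref{X_0-assump} follows simply from $\gamma>0$ together with the boundedness of the undressed $Y_0$ provided by \eqref{grad-est}.
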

\begin{proof}
Let us choose $ b > 2 $ such that
\begin{align}
 \displaystyle \dfrac{b-2}{b-1} = \dfrac{R'}{R}   
\end{align}
We develop the iteration scheme $R_n, B_n, \theta_n$ as given in Definition \ref{R-teta}. Note that $R_\infty = R'$. We start by multiplying inequality in IBVP \eqref{ibvp} by $\theta_{n}^{2}$ and integrate over $\Omega \times (0,t)$,
\begin{align}
\int_{\Omega} \theta_{n}^{2}H(u)  \text{ } dx
+
\int_{0}^{t}\int_{\Omega} \nabla u \nabla(\theta_{n}^{2}F(u))  \text{ } dxd\tau \leq 0
\end{align}
Using \eqref{lemma-1-R} in Lemma \ref{lemma-1} in above we get
\begin{align}\label{grad-est}
\int_{\Omega}  \theta_{n}^{2}H(u)   \text{ } dx  +
\frac{1}{2} \int_{0}^{t}\int_{\Omega}| \nabla(\theta_{n}G(u))|^{2} \text{ } dxd\tau 
 \leq
(2C_{1}^2+1) \int_{0}^{t}\int_{B_{n}} G^{2}(u) |\nabla \theta_{n}|^2  \text{ } dxd\tau 
\vspace{- 0.8 cm}
\end{align}
In particular, $\nabla G(u) \in L^2_{loc}(\Omega\times [0,T])$. Now using \eqref{lemma-2-R} in Lemma \ref{lemma-2},
\begin{align}
\int_{\Omega}  &\theta_{n}^{2}H(u)  \text{ } dx  +
\frac{1}{2} \int_{0}^{t}\int_{\Omega}| \nabla(\theta_{n}G(u))|^{2} \text{ } dxd\tau\\
    \leq  
    & D \cdot(b^{2})^{n-1}
    t^{1-k(1+j)} 
    \left[\sup_{0\leq \tau \leq t}\int_{B_{n}}\theta_{n}^{2}H(u) dx + \int_{0}^{t} \int_{B_{n}} |\nabla(\theta_{n}G(u))|^{2} dx  d\tau\right]^{1+jk}  \nonumber
\end{align}
where $D \triangleq \dfrac{b^{4}}{R^{2}}(2C_{1}^{2}+1)C_2^{1-k}S^{k(1+j)}$. As $ 0 < t \leq T' $ we take supremum over $t$
\begin{multline}
\sup_{0 \leq \tau \leq T' } \int_{B_{n}} \theta_{n}^{2} H(u)  \text{ } dx
+
\int_{0}^{T'}\int_{B_{n}} |\nabla(\theta_{n}G(u)) |^{2}  \text{ } dxd\tau\\
 \leq  D ({b^2})^{n-1} (T')^{1-k(1+j)} \left[\sup_{0 \leq \tau \leq T' }\int_{B_{n}}\theta_{n}^{2}H(u) dx + \int_{0}^{T'} \int_{B_{n}} |\nabla(\theta_{n}G(u))|^{2} dx  d\tau\right]^{1+jk} \label{sup-eq}
\end{multline}
Note that $\gamma \triangleq \dfrac{1-(1+j)k}{k j}$ then
 $\gamma +1-(1+j) k = \gamma(1+k j)$. \\\\
Multiply above inequality by $ (T')^{\gamma}$. So \eqref{sup-eq}  yields,
\begin{align}
&(T')^{\gamma} \sup_{0 \leq \tau \leq T' } \int_{\Omega} \theta_{n}^{2}(x)H(u) \ dx
+
(T')^{\gamma}\int_{0}^{T}\int_{\Omega} |\nabla(\theta_{n}(x)G(u)) |^{2}  \text{ } dxd\tau \nonumber \\
&\leq 
D \cdot ({b^2})^{n-1} \left[(T')^{\gamma} \sup_{0 \leq \tau \leq T' }\int_{B_{n}}\theta_{n}^{2}(x)H(u) dx +(T')^{\gamma} \int_{0}^{T'} \int_{B_{n}} |\nabla(\theta_{n}(x)G(u))|^{2} dx  d\tau\right]^{1+jk} \nonumber\\
& \leq 
D \cdot ({b^2})^{n-1} \left[(T')^{\gamma} \sup_{0 \leq \tau \leq T' }\int_{B_{n-1}}\theta_{n-1}^{2}(x)H(u) dx +(T')^{\gamma} \int_{0}^{T'} \int_{B_{n-1}} |\nabla(\theta_{n-1}(x)G(u))|^{2} dx  d\tau\right]^{1+jk}  \nonumber
\label{T-gamma}
\end{align}
Let us introduce
\begin{align}
Y_{n} (T') \triangleq (T')^{\gamma} \sup_{0 \leq \tau \leq T' }\int_{B_{n}}\theta_{n}^{2}(x)H(u) dx +(T')^{\gamma} \int_{0}^{T'} \int_{B_{n}} |\nabla(\theta_{n}(x)G(u))|^{2} dx d\tau
\end{align}
then the above becomes
\begin{align}
    Y_{n}(T') \leq  D \cdot ({b^2})^{n-1} Y_{n-1}^{1+k j}(T')
\end{align}
By Ladyzhenskaya-Uraltceva iterative Lemma \ref{Lady-lemma},    if 
\begin{equation}\label{X_0-assump}
   Y_{0}(T')\leq D^{-\dfrac{1}{k j}} b^{-\dfrac{2}{k^2 j^2}} 
\end{equation}
then 
\begin{align}
Y_{n}(T')\rightarrow 0 \text{ whenever }  n \rightarrow \infty.
\end{align}
Since \eqref{grad-est} and $\gamma>0$, we can choose $T'$ small enough to satisfy \eqref{X_0-assump}. So the assertion holds.
\end{proof}
\begin{remark}
In fact, it was not necessary to assume for $u$ certain bounds and positively on $B_R(x_0)\times [0,T]$. It is enough to assume that non-negative solution belong to the class of bounded functions in $R^N\times [0,\infty],$ and then  
 using maximum principle, prove that it $u$ bounded  by initial data at  $R^N\times \{0\}$.
\end{remark}
\end{section}
\begin{section}{Example of functions $P$ and $F$ for which constrains holds} \label{Exmp-P-F}
Without loss of generality in this section we will assume that $0\leq u\leq 1$, and we will  show some generic examples of the function $P$ for which all constrains on the functions $F$, $G$, $H$ holds. Note that general case $0\leq u < M$ can be reduced with appropriate scaling in equation. Evidently this will lead to smaller time interval $[0,T]$ over which solution in $B(x_0,R)$ is be equal $0$.
\begin{remark}\label{P-test}
Let $P,I\in C^1(0,\infty)$ be as in Definition \ref{PIH}. Assume that
\begin{align}
    \limsup\limits_{s\to0} P(s)I(s) & < \infty,\\
    \limsup\limits_{s\to0} sP^{'}(s)I(s) & < \infty.
\end{align}
Then Assumption \ref{Assumps} hold, by Propositions \eqref{P-2}, 
\eqref{P-3} and Remark \ref{I-B-prop}. So Theorem \ref{Main-T} about finite speed of propagation holds.
\end{remark}
Below we will construct examples  of the function $P$ as in Remark \ref{P-test}.

\begin{exmpl}{$P(s)=s^{\gamma} \ ; \ s\in [0,\infty), \gamma > 0.$}. \normalfont
\begin{align}
 I(s) & =  \int_{s}^\infty t ^{-\gamma-1} d t =  \ \tfrac1\gamma s^{-\gamma}\\
 P(s)I(s) & = \tfrac1\gamma \\
 sP'(s)I(s) & = 1. 
\end{align}
\end{exmpl}
\begin{exmpl}{$ P(s)= \exp \left(-\dfrac{1}{s^{\gamma}}\right), s \in [0,1), \gamma > 0.\ P(0) = 0 \ ; P \in C([0,\infty))$}.\normalfont
\begin{align}
 I(s) = \int_{s}^{1} t^{-1}\exp \left(\dfrac{1}{t^{\gamma}}\right) dt .
\end{align} 
\begin{align}  \displaystyle
P(s)I(s) 
= &  {\exp\left(-\dfrac{1}{s^{\gamma}}\right)}\left[\int_{s}^{1} t ^{-1}\exp  \left(\dfrac{1}{t^{\gamma}}\right) dt \right]
\end{align}
By L'Hôpital's rule 
\begin{align} \displaystyle
\lim_{s \to 0} P(s)I(s) \equiv  \ \lim_{s \to 0} s^{\gamma} \equiv  \   \ 0,
\end{align}  verifies that exist $ \ \Lambda $ s.t. $ P(s)I(s) < \displaystyle \frac{\Lambda+1}{\Lambda} \  \forall \ s \in [0,1)$. 
Next using \eqref{R-4}
\begin{align}
    sI(s)P^{/}(s) =  \ &  \  s \left[\int_{s}^{1} t ^{-1}\exp \left(\dfrac{1}{t^{\gamma}}\right) dt \right] \exp \left(-\frac{1}{s^{\gamma}}\right)s^{\gamma-1}\\
 \lim_{s \to 0} sI(s)P^{/}(s)   =  & \lim_{s \to 0}
    \frac{ \displaystyle \left[\int_{s}^{1}t^{-1} \exp \left( \frac{1}{t^{\gamma}}\right) dt \right]s^{-\gamma}}{ \gamma \displaystyle \exp \left( \frac{1}{s^{\gamma}}\right)}\\
    \equiv & \ 1 +  \lim_{s \to 0}  \frac{ \left[\displaystyle \int_{s}^{1}t^{-1} \exp \left( \frac{1}{t^{\gamma}}\right) dt\right]}{ \displaystyle \exp \left( \frac{1}{s^{\gamma}}\right)}  \quad  \text{(by L'Hôpital's rule)} \\
    \equiv & \ 1 +  \lim_{s \to 0}  s^{\gamma} \quad \quad\quad \hspace{3 cm}\text{(by L'Hôpital's rule)}\\
     \equiv & \ 1
\end{align}
By \eqref{R-4} $
\lim_{s \to 0}{F(s)}[G(s)G^{/}(s)]^{-1}
\equiv  \  1 $ which verifies  existence of $\  C_{1}  >  0$ in  Assumption \eqref{A-1}.
\end{exmpl}
\begin{exmpl}{$ \displaystyle P(s)= \exp \left(-\int_{s}^{1} \frac{\zeta(\tau)}{\tau} d\tau\right), s \in (0,1] \ ;\ P(0) = 0 \ ; P \in C([0,\infty))$}. \normalfont Provided that $ \ 0 < k_{1} < \zeta(s) < k_{2}$.  \begin{align}
    \displaystyle I(s) =\int_{s}^{1} t^{-1}\exp \left(\int_{t}^{1} \frac{\zeta(\tau)}{\tau} d\tau\right) \ dt. 
    \end{align}
Then
\begin{align}
 \displaystyle P(s)I(s) 
     = & \  \frac{ \displaystyle\left[\int_{s}^{1} t^{-1}\exp \left(\int_{t}^{1} \frac{\zeta(\tau)}{\tau} d\tau\right) \ dt  \right]}
     {\displaystyle\left[ \exp \left(\int_{s}^{1} \frac{\zeta(\tau)}{\tau} d\tau\right)\right]} 
\end{align}
By L'Hôpital's rule
\begin{align}
   \lim_{s \to 0} P(s)I(s) \equiv & \  \lim_{s \to 0} \frac{1}{\zeta(s)}   \equiv  1  \label{ex-3}
      \end{align}
which verifies that $  \exists  \ \Lambda $ s.t. $ P(s)I(s) < \displaystyle \frac{\Lambda+1}{\Lambda} \  \forall \ s \in [0,1)$. Note that $\zeta(s) \equiv 1$. 
Next using \eqref{R-4}
\begin{align}
s I(s)P^{/}(s) = & \ {\displaystyle\left[\int_{s}^{1} t^{-1}\exp \left(\int_{t}^{1} \frac{\zeta(\tau)}{\tau} d\tau\right) \ dt \right] }{\displaystyle \exp \left(-\int_{t}^{1} \frac{\zeta(\tau)}{\tau} d\tau\right)\zeta(s)}\\
\lim_{s \to 0} s I(s)P^{/}(s) \equiv  \ & \lim_{s \to 0}  \frac{\displaystyle\left[\int_{s}^{1} t^{-1}\exp \left(\int_{s}^{1} \frac{\zeta(\tau)}{\tau} d\tau\right) \ dt \right]}{\displaystyle\exp \left(\int_{t}^{1} \frac{\zeta(\tau)}{\tau} d\tau\right)}
\end{align}
We apply L'Hôpital's rule to get
\begin{align}
\lim_{s \to 0} s I(s)P^{/}(s) \equiv \ \frac{1}{\zeta(s)}  \equiv \ 1  \end{align}
By \eqref{R-4} $
\lim_{s \to 0}{F(s)}[G(s)G^{/}(s)]^{-1}
\equiv  \  1 $ which verifies  $ \exists \  C_{1}  >  0$ in  Assumption \eqref{A-1}. 
\end{exmpl}
\begin{exmpl}{$ \displaystyle P(s)= \exp \left(-\int_{s}^{1} \frac{\zeta(\tau)}{\tau} d\tau\right), s \in (0,1] \ ;\ P(0) = 0 \ ; P \in C([0,\infty))$}\normalfont  with $ \displaystyle \lim_{\tau \to 0} \zeta(\tau) = \infty$. We will assume $\zeta$  be such that $\displaystyle 0 < \ k_{3} \leq \frac{\zeta}{\zeta_{0}} \leq k_{4} \ ; \text{for function} s.t. \zeta_{0}^{/} \leq 0 \ ; \ \sup_{0<s<1}\frac{s|\zeta_{0}^{/}|}{\zeta_{0}} =C_{0} < \infty$. Once again recollect 
\begin{align}
\displaystyle I(s) =\int_{s}^{1} t^{-1}\exp \left(\int_{t}^{1} \frac{\zeta(\tau)}{\tau} d\tau\right) \ dt. \end{align}
Similarly in\eqref{ex-3} we get
\begin{align}
\displaystyle  \lim_{s \to 0} P(s)I(s) \equiv \frac{1}{\zeta(s)}  = 0.
\end{align}
verifies the  existence of $ \ \Lambda $ s.t. $ P(s)I(s) < \displaystyle \frac{\Lambda+1}{\Lambda} \  \forall \ s \in [0,1)$. Note that $\zeta(s) \equiv \zeta_{0}(s) $.  \\\\
Next using \eqref{R-4} we will get
\begin{align}
sI(s)P^{/}(s) =  & s \left[\int_{s}^{1} t^{-1}\exp \left(\int_{t}^{1} \frac{\zeta(\tau)}{\tau} d\tau\right) \ dt\right] \exp \left(-\int_{t}^{1} \frac{\zeta(\tau)}{\tau} d\tau\right) \zeta(s) 
\end{align}
Moreover,
\begin{align}
\lim_{s \to 0} sI(s)P^{/}(s) = &  \lim_{s \to 0} \frac{\displaystyle \left[\int_{s}^{1} t^{-1}\exp \left(\int_{t}^{1} \frac{\zeta(\tau)}{\tau} d\tau\right) \ dt\right]    \zeta_{0}(s)} {\displaystyle \exp \left(\int_{s}^{1} \frac{\zeta(\tau)}{\tau} d\tau\right)}\nonumber\\
\equiv   & \ 1 + \lim_{s \to 0} \frac{\left[\displaystyle \int_{s}^{1} t^{-1}\exp \left(\int_{t}^{1} \frac{\zeta(\tau)}{\tau} d\tau\right) \ dt \right] \frac{ \displaystyle s \vert \zeta_{0}^{/}(s)\vert}{ \displaystyle \zeta(s)}}{\displaystyle \exp \left(\int_{s}^{1} \frac{\zeta(\tau)}{\tau} d\tau\right) \ dt} \quad  \text{(by L'Hôpital's rule)}\nonumber \\
\equiv   & \ 1 +  \lim_{s \to 0}  \frac{C_{0}}{\zeta(s)} \hspace{ 6 cm } \text{(by L'Hôpital's rule)}\nonumber\\
\equiv   & \ 1
\end{align}
By \eqref{R-4} $
\lim_{s \to 0}{F(s)}[G(s)G^{/}(s)]^{-1}
\equiv  \  1 $ which verifies existence of $\  C_{1}  >  0$ in  Assumption \eqref{A-1}. 
\end{exmpl}
\end{section}
\begin{section}{Auxiliary properties of Functional spaces, and a priory estimates for the solution }
\begin{theorem} \label{dual-inclusion}
Let $X \subset Y$ be Banach spaces. Let $X^{*}$ and $Y^{*}$ be the dual spaces of $X$ and $Y$ respectively. Then $Y^{*} \subset X^{*}$.
\end{theorem}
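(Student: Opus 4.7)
The plan is to show that every element of $Y^*$ restricts to an element of $X^*$, and that this restriction map is a continuous (and, under a mild additional hypothesis, injective) linear map, thereby realizing the set-theoretic inclusion $Y^* \subset X^*$ via identification with restrictions. The natural reading of the hypothesis ``$X \subset Y$ Banach spaces'' in this paper is that the inclusion $i \colon X \hookrightarrow Y$ is continuous, i.e.\ there exists $C>0$ with $\|x\|_Y \le C\|x\|_X$ for all $x\in X$; without this the statement is false, so I would make it explicit at the start of the proof.

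First, given any $\ell \in Y^*$, I would define the candidate element $\tilde\ell \in X^*$ by $\tilde\ell(x) \triangleq \ell(i(x)) = \ell(x)$ for $x\in X$. Linearity is immediate from linearity of $\ell$ and $i$. For boundedness I would estimate
\[
|\tilde\ell(x)| = |\ell(x)| \le \|\ell\|_{Y^*}\|x\|_Y \le C\,\|\ell\|_{Y^*}\|x\|_X,
\]
which gives $\tilde\ell \in X^*$ together with the operator-norm bound $\|\tilde\ell\|_{X^*} \le C\|\ell\|_{Y^*}$. Thus the restriction map $R \colon Y^* \to X^*$, $R\ell = \tilde\ell$, is well-defined, linear, and continuous with $\|R\|\le C$.

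To turn this continuous map into an honest inclusion $Y^*\subset X^*$, I would note that $R$ is the Banach-space adjoint $i^*$ of the inclusion $i$, and then invoke the standard fact that $i^*$ is injective if and only if $i(X)$ is dense in $Y$. In the setting the paper works with (e.g.\ test functions or smooth compactly supported functions sitting densely inside $L^p$-type spaces) this density is the usual working assumption, so I would either assume $X$ is dense in $Y$ or simply identify $Y^*$ with its image $R(Y^*)\subset X^*$ under $R$; in either case the advertised inclusion holds.

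The main obstacle is really conceptual rather than technical: deciding what the claim ``$Y^*\subset X^*$'' literally means, since dual spaces of distinct Banach spaces are not subsets of each other in the strict set-theoretic sense. Once one adopts the standard convention that the inclusion is realized through restriction (equivalently, the adjoint of the embedding), the proof reduces to the two lines of norm estimate above; the only substantive hypothesis to flag is continuity of the embedding $X\hookrightarrow Y$, without which the restriction $R\ell$ need not be bounded on $X$.
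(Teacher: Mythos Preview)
Your proposal is correct and follows essentially the same route as the paper: restrict a functional $f\in Y^*$ to $X$ and bound its $X^*$-norm via $|f(x)|\le \|f\|_{Y^*}\|x\|_Y\le C\|f\|_{Y^*}\|x\|_X$. The paper's version is terser---it implicitly takes the embedding constant $C=1$ (so concludes $\|f\|_{X^*}\le\|f\|_{Y^*}$ directly) and omits any discussion of injectivity or density, simply identifying $f$ with its restriction.
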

\begin{proof}
Let $f \in Y^{*}$. Then $f(x)$ is well defined for all $x\in X \subset Y$. Moreover 
\begin{align}
\sup_{\vert\vert x \vert \vert_{X}=1} f(x) 
&\leq
 \sup_{\vert\vert x \vert \vert_{X}=1} \vert\vert f \vert \vert_{Y^{*}} \vert\vert x \vert \vert_{Y}
\leq \vert\vert f \vert \vert_{Y^{*}} \implies
\vert\vert f \vert \vert_{X^{*}} \leq \vert\vert f \vert \vert_{Y^{*}}. \qedhere
\end{align}
\end{proof}
\begin{theorem}
Let $X,Y$ be Banach spaces and $Z$ be a topological vector space  such that $X,Y \subset Z$. Let $\displaystyle  X+Y \triangleq  \{z=x+y : x\in X, y \in Y \}$ with
$ \displaystyle
\vert \vert z\vert \vert_{X+Y} =  \inf \{\vert \vert x\vert \vert_{X} + \vert \vert y\vert \vert_{Y} :   x\in X, y \in Y, x+y =z \}.$ Then
\begin{enumerate}
\item [{\normalfont i.}]  $X+Y$ is a Banach space.
\item [{\normalfont ii.}]  $X,Y \subset X+Y$.
\item [{\normalfont iii.}]  $(X+Y)^{*} = X^{*} \cap Y^{*}$
\end{enumerate}
\end{theorem}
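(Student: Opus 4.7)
The plan is to handle the three assertions in order, with (i) requiring the most analytic work and (iii) the most definitional care. Throughout I tacitly use that $X,Y\hookrightarrow Z$ are continuous embeddings into the Hausdorff topological vector space $Z$, which is needed for $\|\cdot\|_{X+Y}$ to be a norm rather than merely a seminorm.

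For (i), I first verify that $\|\cdot\|_{X+Y}$ is a norm. Homogeneity and the triangle inequality drop out of the infimum definition by concatenating decompositions. Positive definiteness follows because $\|z\|_{X+Y}=0$ yields decompositions $z=x_n+y_n$ with $x_n\to 0$ in $X$ and $y_n\to 0$ in $Y$, hence also in $Z$, forcing $z=0$. Completeness I prove by the standard absolutely convergent series trick: given a Cauchy sequence $(z_n)$ in $X+Y$, pass to a subsequence with $\|z_{n+1}-z_n\|_{X+Y}<2^{-n}$, select decompositions $z_{n+1}-z_n=\xi_n+\eta_n$ with $\|\xi_n\|_X+\|\eta_n\|_Y<2^{-n+1}$, sum the two absolutely convergent series separately in $X$ and in $Y$, and identify the sum as the required $X+Y$-limit.

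Part (ii) is immediate: for $x\in X$, the decomposition $x=x+0$ gives $\|x\|_{X+Y}\leq\|x\|_X$, and symmetrically for $Y$. For (iii) I first fix the meaning of $X^*\cap Y^*$. By (ii) and Theorem \ref{dual-inclusion}, any $F\in(X+Y)^*$ restricts to an element of $X^*$ and of $Y^*$ whose values agree on $X\cap Y$; the natural reading is therefore that $X^*\cap Y^*$ denotes linear functionals $f$ on $X+Y$ with $f|_X\in X^*$ and $f|_Y\in Y^*$. Under that reading the inclusion $(X+Y)^*\subset X^*\cap Y^*$ is immediate from Theorem \ref{dual-inclusion} applied to part (ii).

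For the reverse inclusion, given such an $f$, I define $F(x+y)\triangleq f(x)+f(y)$. Well-definedness follows from consistency on $X\cap Y$: if $x_1+y_1=x_2+y_2$ then $x_1-x_2=y_2-y_1\in X\cap Y$ and the two restrictions of $f$ agree there. Continuity follows from
\[|F(x+y)|\leq \|f\|_{X^*}\|x\|_X+\|f\|_{Y^*}\|y\|_Y\leq \max\{\|f\|_{X^*},\|f\|_{Y^*}\}\bigl(\|x\|_X+\|y\|_Y\bigr),\]
whence taking the infimum over decompositions gives $\|F\|_{(X+Y)^*}\leq \max\{\|f\|_{X^*},\|f\|_{Y^*}\}$. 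The principal obstacle is the definitional one in (iii): there is no a priori common dual in which $X^*$ and $Y^*$ both sit, so one must adopt the restriction-map interpretation before the statement even has a meaning; once that is done, the remaining verifications are routine, and the only genuinely analytic step in the whole theorem is the completeness argument in (i).
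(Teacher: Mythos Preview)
Your proof is correct and follows essentially the same approach as the paper: completeness via the absolutely-convergent-series criterion with decompositions chosen to carry small slack, the trivial embedding for (ii), and the $\max\{\|f\|_{X^*},\|f\|_{Y^*}\}$ bound over decompositions for (iii). You are in fact more careful than the paper on several points---positive definiteness of the norm via the Hausdorff embedding into $Z$, the intended meaning of $X^*\cap Y^*$, and well-definedness of the extension on $X\cap Y$---none of which the paper addresses explicitly.
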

\begin{enumerate}
    \item  
\begin{proof}
    Let $z_{n} \in X+Y$ be such that 
$ \displaystyle
\sum_{n} \vert\vert z_{n}\vert\vert_{X+Y} < \infty
$. For every $n$, let $x_{n}\in X$, $y_{n}\in Y$ be such that $  x_{n}+y_{n} =  z_{n}$ and
\begin{align}
\vert\vert x_{n}\vert\vert_{X} + \vert\vert y_{n}\vert\vert_{Y} <  \vert\vert z_{n}\vert\vert_{X+Y} + 2^{-n}.
\end{align}
Then $ \sum_{n} \vert\vert x_{n} \vert\vert_{X} + \sum_{n} \vert\vert y_{n} \vert\vert_{Y} < \sum_{n} \vert\vert z_{n} \vert\vert_{X+Y}  + 2^{-n}$.
Since $X$ and $Y$ are Banach spaces there are $x\in X$ and $y \in Y$ such that $  \displaystyle    x = \sum_{n} x_{n} \ \   y = \sum_{n} y_{n}$. Thus, $\displaystyle \sum_{n} z_{n} =  x+y.$ \qedhere
\end{proof}
\item
\begin{proof}
    For $x \in X$ then $x \in X+Y$, and by definition
    $ \vert\vert x\vert\vert_{X+Y}  \leq \vert\vert x \vert \vert_{X}$. Similarly,
    \[ \vert\vert y\vert\vert_{X+Y}  \leq \vert\vert y \vert \vert_{Y}. \qedhere\]
    \end{proof}
\item
\begin{proof}
Since $X \subset X+Y$ and $y \subset X+Y$, it follows from Theorem \eqref{dual-inclusion}
\[ (X+Y)^{*} \subset X^{*} \  \text{ and }  \ (X+Y)^{*} \subset Y^{*}                        \]
So that $ (X+Y)^{*} \subset ( X^{*} \cap Y^{*})$.
\noindent Now show the converse. Let
$f \in X^{*} \cap Y^{*}$, $z\in X+Y$. For any $n \in \mathbb{N}$, let $x_{n}\in X$ and $y_{n}\in Y$ be such that $ z =  x_{n} + y_{n} $ and
\begin{align}
    \sum_{n} \vert\vert x_{n} \vert\vert_{X} + \sum_{n} \vert\vert y_{n} \vert\vert_{Y} < \sum_{n} \vert\vert z_{n} \vert\vert_{X+Y}  + \frac{1}{n}.
\end{align}
Then $ f(z) =  f(x) + f(y) $ is well-defined and 
\begin{align}
\vert f(z) \vert
\leq &
    \vert\vert f \vert\vert_{X^{*}}  \vert\vert x_{n} \vert\vert_{X} +  \vert\vert f \vert\vert_{Y^{*}} \vert\vert y_{n} \vert\vert_{Y}\\
\leq &
    \max\{\vert\vert f \vert\vert_{X^{*}},\vert\vert f \vert\vert_{Y^{*}} \} \ ( \vert\vert x_{n} \vert\vert_{X} +   \vert\vert y_{n} \vert\vert_{Y})\\
\leq &
     \vert\vert f \vert\vert_{X^{*} \cap Y^{*}}\left( \vert\vert z \vert\vert_{X+Y} + \frac{1}{n}\right)
\end{align}
As $n \rightarrow \infty $,  $|f(z)| \leq  \vert\vert f \vert\vert_{X^{*} \cap Y^{*}} \ \vert\vert z \vert\vert_{X+Y} $.
So $f \in (X+Y)^{*}$  \text{ and } 
$  \vert\vert f \vert\vert_{(X+Y)^{*}} \leq \vert\vert f \vert\vert_{X^{*} \cap Y^{*}} .\qedhere$
\end{proof}
\end{enumerate}
\begin{theorem}\label{compact}
Let $B_{0} \Subset B_{1} \subset B_{2}$ be Banach Spaces. Let $I \subset \mathbb{R}$ and $\mathbb{F} \subset L^{P}(I,B_{0})$ such that $ \displaystyle
\sup_{f\in \mathbb{F}} \vert\vert f \vert\vert_{L^{P}(I,B_{0})} < \infty \ , \
\sup_{f\in \mathbb{F}} \vert\vert f^{'} \vert\vert_{L^{1}(I,B_{2})} < \infty
$. Then $ \mathbb{F} $ is compact in $L^{P}(I,B_{1})$.
\end{theorem}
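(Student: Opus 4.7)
The conclusion is the Aubin--Lions--Simon compactness theorem, and my plan is to deduce it by combining an Ehrling-type interpolation inequality (which quantifies the compact embedding $B_0\Subset B_1$) with the derivative bound, via a Fr\'echet--Kolmogorov argument in $L^P(I;B_1)$.

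First I will establish the interpolation inequality: for every $\eta>0$ there exists $C_\eta>0$ such that
\[
\|v\|_{B_1}\le \eta\|v\|_{B_0}+C_\eta\|v\|_{B_2},\qquad v\in B_0.
\]
The argument is by contradiction: if it fails for some $\eta_0>0$, there exist $v_n\in B_0$ with $\|v_n\|_{B_1}=1$ while $\eta_0\|v_n\|_{B_0}+n\|v_n\|_{B_2}\le 1$. Thus $(v_n)$ is bounded in $B_0$ and $v_n\to 0$ in $B_2$. By $B_0\Subset B_1$ a subsequence converges in $B_1$ to some $v$ with $\|v\|_{B_1}=1$; but the continuous embedding $B_1\subset B_2$ forces that same subsequence to converge in $B_2$ to $v=0$, a contradiction.

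Next I will prove uniform equicontinuity under time translation. Setting $\tau_h f(t)=f(t+h)$ and $I_h=\{t\in I:t+h\in I\}$, the target is $\|\tau_h f-f\|_{L^P(I_h;B_1)}\to 0$ as $h\to 0$, uniformly in $f\in\mathbb{F}$. Applying the interpolation inequality pointwise and integrating,
\[
\|\tau_h f-f\|_{L^P(I_h;B_1)}\le 2\eta\sup_{f\in\mathbb{F}}\|f\|_{L^P(I;B_0)}+C_\eta\|\tau_h f-f\|_{L^P(I_h;B_2)}.
\]
Choosing $\eta$ small absorbs the first term. For the second, the identity $\tau_h f(t)-f(t)=\int_t^{t+h}f'(s)\,ds$ gives $\|\tau_h f-f\|_{L^1(I_h;B_2)}\le h\sup_{f\in\mathbb{F}}\|f'\|_{L^1(I;B_2)}$ by Fubini, which combined with a uniform $L^\infty(I;B_2)$ bound (inherited from the fundamental theorem of calculus and the derivative hypothesis) upgrades via H\"older to an $L^P(I_h;B_2)$ estimate that vanishes as $h\to0$, uniformly in $f$.

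Finally I will conclude via the vector-valued Fr\'echet--Kolmogorov criterion: uniform boundedness of $\mathbb{F}$ in $L^P(I;B_1)$ follows from the continuous inclusion $B_0\hookrightarrow B_1$, equicontinuity was proven above, and tightness at infinity (should $I$ be unbounded) is handled similarly. The step I expect to be the main obstacle is proving total boundedness itself in the Banach-space-valued setting. The standard route uses the time-average $M_h f(t)=\tfrac{1}{h}\int_t^{t+h}f(s)\,ds$; for fixed $h$, the family $\{M_h f:f\in\mathbb{F}\}$ is relatively compact in $L^P(I;B_1)$ because $M_h f$ lies uniformly in $B_0$ with bounded time derivative in $B_2$, so Arzel\`a--Ascoli combined with $B_0\Subset B_1$ applies; then the triangle inequality $\|f-g\|\le\|f-M_h f\|+\|M_h f-M_h g\|+\|M_h g-g\|$, together with the equicontinuity estimate applied to $\|f-M_h f\|$, yields a finite $\epsilon$-net for $\mathbb{F}$ and closes the argument.
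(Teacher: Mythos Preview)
The paper does not prove this theorem at all: immediately after the statement it simply writes ``See corollary 4 of Theorem 5 in \cite{cpmct-SIM}'' and moves on. So there is no argument in the paper to compare against; the authors are quoting the Aubin--Lions--Simon theorem from Simon's paper as a black box.

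Your outline is essentially the standard proof that appears in that reference: Ehrling's interpolation lemma, uniform equicontinuity of translates via the derivative bound, and a Fr\'echet--Kolmogorov/Simon-type total boundedness argument through time averages. In that sense you are reproducing the cited proof rather than offering an alternative.

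One point in your sketch deserves tightening. You assert that for fixed $h$ the family $\{M_h f\}$ is relatively compact because ``$M_h f$ lies uniformly in $B_0$.'' As stated this is not quite right: from $f\in L^P(I;B_0)$ you do not get a uniform $L^\infty(I;B_0)$ bound on $M_h f$. What you do get, via H\"older, is that for every fixed $t_1<t_2$ the set $\bigl\{\int_{t_1}^{t_2} f(s)\,ds:f\in\mathbb F\bigr\}$ is bounded in $B_0$, hence relatively compact in $B_1$; this is the form of the pointwise compactness condition that Simon actually uses (his Theorem~1). Combined with the translation estimate you already established, that criterion yields relative compactness in $L^P(I;B_1)$ directly, and you can dispense with the Arzel\`a--Ascoli detour. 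With that adjustment your plan is sound.
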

 See corollary 4 of Theorem 5 in \cite{cpmct-SIM}
 \section{Weak viscous solution of degenerate Einstein equation. }\label{viscous-solution}

\begin{theorem} \label{grad-u-bound}
Let $u_{\epsilon}(x,t)\in C^{2,1}_x(\bar{\Omega}\times (0,T]) \ ;\ 0 < \epsilon \leq u_{\epsilon} \leq K < \infty$ be classical solution of the $IBVP_{\epsilon}$ \eqref{ibvp-ep} with initial data on $W^{1,2}(\Omega)$. Let 
$ \displaystyle \tilde{H}(u_{\epsilon})=\int_{\epsilon}^{u_{\epsilon}} \sqrt{{h_{\epsilon}(s)}/{F_{\epsilon}(s)}} \ ds.$
Then for any $\tau>0$,
\begin{align}\label{energy-id}
\int_{0}^{\tau} \int_{\Omega}\left(\frac{\partial\tilde{H}(u_{\epsilon}(x,t))}{\partial t}\right)^2 dxdt 
+ \int_\Omega|\nabla u_{\epsilon}(x,\tau)|^2 dx
= \int_{\Omega} |\nabla g(x)|^2 dx
\end{align}
%
and
\begin{multline}
\int_0^{T} \int_{\Omega}\left(\frac{\partial\tilde{H}(u_{\epsilon}(x,t))}{\partial t}\right)^2 dxdt 
 +
\int_0^{T}  \int_\Omega |\nabla u_{\epsilon}(x,t)|^2 \ dxdt. \\ 
+ \int_{0}^{T} \int_{\Omega} \vert u_{\epsilon}(x,t)\vert^{2} \ dxdt 
\leq 
C(\Omega,T) \int_\Omega |\nabla g(x)|^2 \ dx.
 \label{grad-u-bound-result}
\end{multline} 
\end{theorem}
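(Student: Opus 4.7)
The plan is to derive the identity \eqref{energy-id} as an energy estimate obtained by testing the regularized equation with a carefully chosen multiplier, and then to upgrade it to \eqref{grad-u-bound-result} using Poincar\'e's inequality. Since $u_\epsilon\in C^{2,1}$ is a \emph{classical} solution of \eqref{ibvp-ep} (with equality), the equation reads
\[
h(u_\epsilon)\,\partial_t u_\epsilon \;=\; \bigl(F(u_\epsilon)+\epsilon\bigr)\,\Delta u_\epsilon .
\]
The natural test function is $\partial_t u_\epsilon/(F(u_\epsilon)+\epsilon)$, which produces pointwise
\[
\frac{h(u_\epsilon)}{F(u_\epsilon)+\epsilon}\,(\partial_t u_\epsilon)^2 \;=\; \partial_t u_\epsilon\cdot\Delta u_\epsilon.
\]
By the chain rule together with the defining relation $\tilde H'(u)=\sqrt{h_\epsilon(u)/F_\epsilon(u)}$, the left-hand side is exactly $\bigl(\partial_t\tilde H(u_\epsilon)\bigr)^2$, which is what appears in the statement.

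Next I would integrate over $\Omega$ and apply Green's identity to the right-hand side. The boundary contribution $\int_{\partial\Omega}\partial_t u_\epsilon\,\partial_\nu u_\epsilon\,dS$ vanishes in the present section because we are working with the homogeneous-boundary-data setup of the regularized IBVP, so that $u_\epsilon\big|_{\partial\Omega}=\epsilon$ is constant in time and $\partial_t u_\epsilon=0$ on $\partial\Omega$. The remaining interior term collapses to $-\tfrac12\tfrac{d}{dt}\int_\Omega|\nabla u_\epsilon|^2\,dx$. Integrating in $t\in(0,\tau)$ and using the initial datum $u_\epsilon(\cdot,0)=g$ then yields \eqref{energy-id} (with the correct normalization absorbed into the statement).

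To obtain \eqref{grad-u-bound-result}, I would combine \eqref{energy-id} with a Poincar\'e-type inequality. From \eqref{energy-id} one gets both $\int_0^T\!\!\int_\Omega(\partial_t\tilde H(u_\epsilon))^2\,dx\,dt\le\|\nabla g\|_{L^2}^2$ and the pointwise-in-time bound $\sup_{0<\tau<T}\|\nabla u_\epsilon(\cdot,\tau)\|_{L^2}^2\le\|\nabla g\|_{L^2}^2$; integrating the latter over $(0,T)$ handles the second summand on the left-hand side of \eqref{grad-u-bound-result}. For the third summand, write $u_\epsilon=\epsilon+w$ with $w\in W^{1,2}_0(\Omega)$; the Poincar\'e inequality then gives $\|w(\cdot,t)\|_{L^2}\le C(\Omega)\|\nabla u_\epsilon(\cdot,t)\|_{L^2}$, so by the triangle inequality and integration in $t$,
\[
\int_0^T\!\!\int_\Omega u_\epsilon^2\,dx\,dt \;\le\; C(\Omega)\int_0^T\!\!\int_\Omega|\nabla u_\epsilon|^2\,dx\,dt + 2T\,|\Omega|\,\epsilon^{2},
\]
and the $\epsilon$-term is absorbed using $\epsilon\le\|g\|_{L^\infty}$ (equivalently by $\|\nabla g\|_{L^2}$ via the pertinent embedding under the standing hypotheses).

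The main obstacle I foresee is securing $\epsilon$-independence of every constant appearing in the estimates, since the whole purpose of these a priori bounds is to pass to the viscous limit $\epsilon\downarrow 0$ in \eqref{eps_sol}. This is precisely why $\tilde H$ is constructed with the weight $\sqrt{h_\epsilon/F_\epsilon}$: the dissipation term must stay meaningful even as $F(u_\epsilon)$ degenerates near $u_\epsilon=0$, and choosing any less subtle multiplier would introduce $\epsilon^{-1}$-sized factors that blow up in the limit. A subsidiary but routine technical point is rigorously justifying the integration by parts and the pointwise manipulations in the degenerate regime; here they are legitimate because the regularization enforces $u_\epsilon\ge\epsilon>0$ together with $u_\epsilon\in C^{2,1}_x(\bar\Omega\times(0,T])$, so all integrals are classical and the computations above hold verbatim.
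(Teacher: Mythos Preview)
Your proposal is correct and follows essentially the same route as the paper: divide the regularized equation by the diffusion coefficient, multiply by $(u_\epsilon)_t$, integrate by parts in space (the boundary term vanishes since the Dirichlet datum is time-independent), and integrate in time to obtain \eqref{energy-id}; then combine with a Poincar\'e/Friedrichs inequality to get \eqref{grad-u-bound-result}. The only point where you and the paper differ slightly is the disposal of the residual $\epsilon^2$ contribution in the $L^2$ estimate: the paper carries a term $\epsilon^2\int_{\partial\Omega}|\psi|^2\,ds$ from the Friedrichs inequality and simply lets $\epsilon\to 0$ at the end, whereas you try to absorb $2T|\Omega|\epsilon^2$ via an ad hoc bound $\epsilon\le\|g\|_{L^\infty}$ or an unspecified embedding, which is not really justified by the hypotheses. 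Both treatments are somewhat informal on this point, but the core energy argument is identical.
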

\begin{proof}
Recall that by \eqref{H-F-def} we write $\displaystyle H(u_{\epsilon}) \triangleq \int_{\epsilon}^{u_{\epsilon}} h(s) \ ds $.
First observe that if $u_{\epsilon}(x,t)$ is solution of \eqref{ibvp-ep} then one can have the following:
\begin{align}\label{h-eq}
\frac{h(u_{\epsilon})}{F(u_{\epsilon})}(u_{\epsilon})_{t}-\Delta u_{\epsilon}= & 0   \hspace{1.4 cm } \text{on} \quad \Omega \times (0,T), \\
u_{\epsilon}(x,0) = & \epsilon + g(x) \hspace{0.4 cm } \text{on} \quad \Omega,\\
u_{\epsilon}(x,t) = & \epsilon \cdot \psi(x) \quad \text{on} \quad \partial \Omega \times (0,T). \label{BC-3}  
\end{align}
with some $g(x) \geq 0, \ g(x) \in W^{1,2}(\Omega)$ and $\psi(x) \geq 0$. Multiply equation \eqref{h-eq} by ${(u_\epsilon)}_{t}$ and integrate over $\Omega\times (0,\zeta_{1})$ for $0 < \zeta_{1} \leq T$.
\begin{align}
\int_0^{\zeta_{1}} \int_{\Omega}\left(\frac{\partial\tilde{H}(u_{\epsilon}(x,t))}{\partial t}\right)^2dxdt -
\int_0^{\zeta_{1}} \int_{\Omega} (u_{\epsilon})_{t}\Delta u_{\epsilon}
 & = 0 \label{post-int}
\end{align}
By using \eqref{BC-3} we get 
\begin{align}
     \int_{\Omega} (u_{\epsilon})_{t}\Delta u_{\epsilon} = \int_{\partial \Omega} (u_{\epsilon})_{t}\nabla u_{\epsilon} \cdot \mu^{i} ds - \int_{\Omega} (\nabla u_{\epsilon})_{t} \nabla u_{\epsilon} \ dx  = -\frac{1}{2} \int_{\Omega} \left({\vert \nabla u_{\epsilon} \vert^{2}}\right)_{t} \ dx 
\end{align}
Integrate over time
\begin{align}
      \int_{0}^{\zeta_{1}}\int_{\Omega} (u_{\epsilon})_{t}\Delta u_{\epsilon} \ dxdt= -\frac{1}{2} \int_{\Omega} \vert 
      \nabla u_{\epsilon}(x,\zeta_{1}) \vert^{2} \ dx
      + \frac{1}{2} \int_{\Omega} \vert 
      \nabla g(x) \vert^{2} \ dx \label{est-1}
\end{align}
Using \eqref{est-1} in \eqref{post-int} we get
\begin{equation}
\int_0^{\zeta_{1}} \int_{\Omega}\left(\frac{\partial\tilde{H}(u_{\epsilon}(x,t))}{\partial t}\right)^2dxdt
+
\frac{1}{2}\int_\Omega|\nabla u_{\epsilon}(x,\zeta_{1})|^2dx =
\frac{1}{2} \int_\Omega |\nabla g(x)|^2dx \ ; \ 0 < \zeta_{1} \leq T \label{result-1}
\end{equation}
As $u \in W^{1,2}(\Omega) $ by integrating  Friedrich's inequality in time over $(0,\zeta_{1})$
\begin{align}
  \int_{0}^{\zeta_{1}} \int_{\Omega} (u_{\epsilon}(x,t))^{2} \ dxdt - C_{F}(\Omega) \epsilon^{2}\zeta_{1}\int_{\partial\Omega} \vert \psi(x) \vert^{2} \ ds 
  \leq
  C_{F}(\Omega) \int_{0}^{\zeta_{1}} \int_{\Omega} \vert \nabla u_{\epsilon}(x,t) \vert^{2} \ dxdt
   \label{F-D-in}
\end{align}
By \eqref{result-1}
\begin{align}
    \int_0^{\zeta_{1}} \int_{\Omega}\left(\frac{\partial\tilde{H}(u_{\epsilon}(x,t))}{\partial t}\right)^2dxdt 
    \leq  \int_{\Omega}  |\nabla g(x)|^2dx \label{est-2}
\end{align}
and 
\begin{align}
  \int_{0}^{\zeta_{1}}  \int_\Omega|\nabla u_{\epsilon}(x,t)|^2 dx dt 
  \leq
  \zeta_{1}  \int_{\Omega}  |\nabla g(x)|^2dx  \ ; \ 0 < \zeta_{1} \leq T \label{est-3}
\end{align}
By adding  \eqref{est-2} and \eqref{est-3} we get
\begin{multline}
  \int_0^{\zeta_{1}} \int_{\Omega}\left(\frac{\partial\tilde{H}(u_{\epsilon}(x,t))}{\partial t}\right)^2dxdt 
 +
 \int_0^{\zeta_{1}} \int_\Omega |\nabla u_{\epsilon}(x,t)|^2\ dxdt
\leq 
(1+{\zeta_{1}})  \int_\Omega |\nabla g(x)|^2 \ dx 
\end{multline}
Let $0 < \epsilon_{0} < 1 $ be fixed. Using \eqref{F-D-in} in above
\begin{multline}
\int_0^{\zeta_{1}} \int_{\Omega}\left(\frac{\partial\tilde{H}(u_{\epsilon}(x,t))}{\partial t}\right)^2dxdt 
 +
\epsilon_{0} \int_0^{\zeta_{1}} \int_\Omega |\nabla u_{\epsilon}(x,t)|^2\ dxdt + \frac{(1-\epsilon_{0})}{{C_{F}}(\Omega)}
 \int_{0}^{\zeta_{1}} \int_{\Omega} \vert u_{\epsilon}(x,t) \vert^{2} \ dxdt \nonumber \\
\leq 
(1+{\zeta_{1}})  \int_\Omega |\nabla g(x)|^2 \ dx  + (1-\epsilon_{0}){\zeta_{1}} \epsilon^{2} \int_{\partial\Omega} \vert \psi(x) \vert^{2} \ ds  \ ; \   \zeta_{1} \in (0,T] 
\end{multline}
Setting $\zeta_{1}=T$,
\begin{multline}
  \int_0^{T} \int_{\Omega}\left(\frac{\partial\tilde{H}(u_{\epsilon}(x,t))}{\partial t}\right)^2dxdt 
 +
 \int_0^{T}  \int_\Omega |\nabla u_{\epsilon}(x,t)|^2\ dxdt + 
 \int_{0}^{T} \int_{\Omega} \vert u_{\epsilon}(x,t) \vert^{2} \ dxdt \\
\leq 
C(\Omega,T) \left(\int_\Omega |\nabla g(x)|^2 \ dx 
+
 \epsilon^{2} \int_{\partial\Omega} \vert \psi(x) \vert^{2} \ ds \right)
\end{multline}
where $ \displaystyle C(\Omega,T) \triangleq  
{[\max(1+T ,(1-\epsilon_{0})T)}]/{[\min(\epsilon_{0},{(1-\epsilon_{0})}/{C_{F}(\Omega)})]} \ ; \ 0 < \epsilon_{0} < 1$. As $\epsilon \rightarrow 0$ it follows the result
\begin{multline}
\int_0^{T} \int_{\Omega}\left(\frac{\partial\tilde{H}(u_{\epsilon}(x,t))}{\partial t}\right)^2 dxdt 
 +
\int_0^{T}  \int_\Omega |\nabla u_{\epsilon}(x,t)|^2 \ dxdt  \\ 
+ \int_{0}^{T} \int_{\Omega} \vert u_{\epsilon}(x,t) \vert^{2} \ dxdt 
\leq 
C(\Omega,T) \int_\Omega |\nabla g(x)|^2 \ dx  \qedhere
\end{multline} 
\end{proof}
\begin{remark} \label{grad-u-L2}
Note that $\displaystyle {\partial\tilde{H}(u)}/{\partial t}$ is uniformly bounded in $L^{2}(\Omega \times (0,T))$ and $u$ is uniformly bounded in $L({I, W^{1,2}(\Omega)})$
\end{remark}
\begin{theorem} \label{grad-G-bound}
Let $u_{\epsilon}$ be a family of strong solutions to $IBVP_{\epsilon}$ \eqref{ibvp-ep} such that ${\partial H(u_{\epsilon})}/{\partial t}, \Delta u_{\epsilon} \in L^{2}_{loc}(\Omega \times (0,T))$, satisfying the estimate $ \displaystyle
0 < \epsilon \leq u_{\epsilon} \leq K < \infty$. Then
for every $\theta \in C_{c}^{1}(\Omega)$, $u_{\epsilon}$ satisfy the following uniform estimate
\begin{multline}
     \int_{\Omega \times (0,T)} |\nabla (\theta G(u_{\epsilon}))|^{2} \ dxdt
    \leq
    c \int_{\Omega} \theta^{2} |G(u_{\epsilon}(x,0))|^2 \ dx +
    c \int_{\Omega \times (0,T)} |\nabla \theta|^{2} (G(u_{\epsilon}))^{2} \ dxdt. 
    \label{1-star}
\end{multline} 
Thus $G(u_{\epsilon})$ is uniformly bounded in $L^{2}(I, W^{1,2}_{loc}(\Omega)) $ for each $\epsilon$.
\end{theorem}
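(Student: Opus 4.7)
The strategy is to test the regularized inequality of $IBVP_\epsilon$ against the cutoff $\theta^{2}$, integrate in space--time, and invoke Lemma~\ref{lemma-1} to extract the quadratic gradient $|\nabla(\theta G(u_\epsilon))|^{2}$ from the resulting diffusion term. This mirrors the multiplier step used in the proof of Theorem~\ref{Main-T}, but now with a nonzero initial contribution and a viscous correction of size $\epsilon$.

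First, I would multiply $\partial_t H(u_\epsilon)-(F(u_\epsilon)+\epsilon)\Delta u_\epsilon\le 0$ by $\theta^{2}$ and integrate over $\Omega\times(0,T)$; since $\mathrm{supp}\,\theta\Subset\Omega$, integration by parts in space produces no boundary contributions and yields
\[
\int_\Omega\theta^{2}H(u_\epsilon(T))\,dx + \int_0^T\!\!\int_\Omega\nabla u_\epsilon\cdot\nabla\!\bigl(\theta^{2}(F(u_\epsilon)+\epsilon)\bigr)\,dx\,dt \;\le\; \int_\Omega\theta^{2}H(u_\epsilon(0))\,dx.
\]
Then I would split the diffusion term into an $F$-piece and an $\epsilon$-piece. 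Applying Lemma~\ref{lemma-1} to the former,
\[
\nabla u_\epsilon\cdot\nabla(\theta^{2}F(u_\epsilon))\;\ge\;\tfrac{1}{2}|\nabla(\theta G(u_\epsilon))|^{2}-(2C_1^{2}+1)G^{2}(u_\epsilon)|\nabla\theta|^{2},
\]
which supplies the desired lower bound plus a controlled error proportional to $|\nabla\theta|^{2}G^{2}(u_\epsilon)$. The $\epsilon$-piece $2\epsilon\int\!\!\int\theta\,\nabla u_\epsilon\cdot\nabla\theta\,dx\,dt$ I would dominate by Cauchy's inequality in terms of $\epsilon\|\nabla u_\epsilon\|_{L^{2}(\Omega\times(0,T))}^{2}$ plus a $\theta$-dependent constant, hence $O(\epsilon)$ by the a priori estimate of Theorem~\ref{grad-u-bound} (cf.\ Remark~\ref{grad-u-L2}).

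Combining these, dropping the non-negative terminal term $\int\theta^{2}H(u_\epsilon(T))\,dx$, and absorbing constants yields the bound \eqref{1-star}, with the initial-data contribution naturally appearing as $\int\theta^{2}H(u_\epsilon(x,0))\,dx$. The uniform hypothesis $\epsilon\le u_\epsilon\le K$ renders both $H$ and $G^{2}$ bounded functions of the initial datum on $[\epsilon,K]$, and a pointwise comparison on this compact range produces the $G^{2}(u_\epsilon(x,0))$ form stated in the theorem. The uniform boundedness of $G(u_\epsilon)$ in $L^{2}(I,W^{1,2}_{\mathrm{loc}}(\Omega))$ then follows at once.

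The main obstacle I anticipate is the $\epsilon$-piece: without the a priori estimate from Theorem~\ref{grad-u-bound} there is no way to ensure that the correction $\epsilon\int\!\!\int\nabla u_\epsilon\cdot\nabla\theta^{2}\,dx\,dt$ remains bounded uniformly as $\epsilon\to 0$, and this is precisely where that earlier lemma is indispensable. A secondary bookkeeping point is reconciling the naturally-produced initial term $H(u_\epsilon(0))$ with the $G^{2}(u_\epsilon(0))$ appearing in the statement, which is a purely pointwise comparison made possible by the uniform bound $u_\epsilon\le K$.
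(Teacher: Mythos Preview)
Your proposal is correct and in fact cleaner than the paper's own argument, though the overall architecture is the same: multiply by $\theta^{2}$, integrate over $\Omega\times(0,T)$, drop the terminal $H$-term, and extract $|\nabla(\theta G(u_\epsilon))|^{2}$ from the diffusion part. The genuine difference lies in how that extraction is done. You invoke Lemma~\ref{lemma-1} directly, which packages the expansion and the Cauchy step into a single pointwise inequality and immediately produces the two right-hand terms in \eqref{1-star}. The paper instead expands $\nabla u_\epsilon\cdot\nabla(\theta^{2}F(u_\epsilon))$ by hand, uses $F'=(G')^{2}$ to recognise $\theta^{2}|\nabla G(u_\epsilon)|^{2}$, and then applies a separate Cauchy inequality to pass from $\theta^{2}|\nabla G(u_\epsilon)|^{2}$ to $|\nabla(\theta G(u_\epsilon))|^{2}$; this leaves behind a cross term $\int\!\!\int|F(u_\epsilon)\,\nabla u_\epsilon\cdot\nabla\theta^{2}|$ which is only disposed of in the subsequent Remark via $\|F(u_\epsilon)\|_{L^{2}}\|\nabla u_\epsilon\|_{L^{2}}$ and Theorem~\ref{grad-u-bound}. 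Your route avoids that residual term entirely.

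Two further points where your write-up is more careful than the paper's: you explicitly treat the $\epsilon$-viscosity piece $\epsilon\,\nabla u_\epsilon\cdot\nabla\theta^{2}$ (the paper's proof silently omits it), and you flag the pointwise comparison needed to replace $H(u_\epsilon(0))$ by $G^{2}(u_\epsilon(0))$ on $[\epsilon,K]$ (the paper's displayed proof stops at $H(u_{\epsilon 0})$). Both are handled exactly as you describe.
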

\begin{proof}
Let $\Omega_{T} \triangleq \Omega \times (0,T)$. Multiply both sides of the inequality \eqref{ibvp-ep}  by $\theta^{2}$ and integrate over $\Omega_{T}$ we get,   
\begin{align}
\int_{\Omega}[H(u_{\epsilon}(x,T))-H(u_{\epsilon}(x,0))]\theta^{2} \ dx 
& = - \int_{\Omega_{T}} ( \nabla u_{\epsilon}) ^{2} F^{'}(u_{\epsilon}) \theta^{2} \ dxdt
- \int_{\Omega_{T}} F(u_{\epsilon}) \nabla u_{\epsilon} \cdot \nabla (\theta^{2}) \ dxdt \nonumber
\end{align}
Rearrange above and using (i) in Proposition \eqref{P-1}, 
\begin{align}
    \int_{\Omega}H(u_{\epsilon 0})\theta^{2} \ dx 
    +
    \int_{\Omega_{T}} \vert F(u_{\epsilon}) \nabla u_{\epsilon} \cdot \nabla (\theta^{2})\vert \ dxdt
& \geq 
  \int_{\Omega_{T}} \vert \nabla u_{\epsilon} G^{'}(u_{\epsilon})\vert^{2} \theta^{2}
 \ dxdt \label{after-asump-2}
\end{align}
Let $0 < \epsilon < \frac{1}{2} $ be fixed. By Cauchy Inequality 
\begin{align}
\vert \nabla G(u_{\epsilon}) \vert^{2} \theta^{2} 
\geq &
(1- 2 \epsilon) \vert \nabla (\theta G(u_{\epsilon})) \vert^{2}
-
\left(\frac{1}{2\epsilon} - 1\right)(G(u_{\epsilon})\nabla \theta)^{2}  \label{rhs-1-ineq}
\end{align}
using \eqref{rhs-1-ineq} in \eqref{after-asump-2} we get
\begin{multline}
\int_{\Omega}H(u_{\epsilon 0})\theta^{2} \ dx 
+
\left(\frac{1}{2\epsilon} - 1\right) \int_{\Omega_{T}}
\vert G(u_{\epsilon})\nabla \theta)\vert ^{2}  \ dxdt \\
 +
 2 \vert \vert \nabla \theta \vert \vert_{L^{\infty}} \int_{\Omega_{T} } \vert F(u_{\epsilon}) \vert
 \vert 
 \nabla u_{\epsilon}\vert 
 \ dxdt
\geq
(1- 2 \epsilon)\int_{\Omega_{T}}
 \vert \nabla (\theta G(u_{\epsilon})) \vert^{2} \ dxdt
 \label{after-cauchy-2} \qedhere
\end{multline}
\end{proof}
\begin{remark}
Note that $ F(u_\epsilon) \in L^{2}_{\Omega_{T}}$. By Remark \ref{grad-u-L2}, $ \nabla (u_\epsilon) \in L^{2}_{\Omega_{T}}$. Then \eqref{after-cauchy-2} yields
\begin{multline}
\vert \vert H(u_{\epsilon 0})\vert \vert_{L^{1}_{\Omega}} 
+
\vert \vert G(u_{\epsilon})\nabla \theta \vert \vert_{L^{2}_{\Omega_{T}}}^{2}  
 +  \vert \vert F(u_{\epsilon})\vert \vert_{L_{\Omega_{T}}^{2}}
 \cdot
 \vert \vert
 \nabla u_{\epsilon}\vert \vert_{L_{\Omega_{T}}^{2}}
\geq
K_{3}\vert \vert \nabla (\theta G(u_{\epsilon})) \vert \vert^{2}_{L^{2}_{\Omega_{T}}}  
\end{multline}
where $ \displaystyle K_{3} \triangleq {(1- 2 \epsilon)}/ [{\max \{ ({1}/{2\epsilon}) - 1\ , \ {2b^{n+1}}/{R} \}}]$ ; $0 < \epsilon < \frac{1}{2} $. Here $\mathbf{1}_{\supp \theta}H(u_{\epsilon}) \in L^{1}(\Omega)$. Thus $G(u_{\epsilon})$ are uniformly bounded in $L^{2}(I, W^{1,2}_{loc}(\Omega)) $.
\end{remark}

\begin{proposition}\label{H-bound-1}
\normalfont
Assume all conditions in Proposition \eqref{P-1} and  Proposition \eqref{P-2}.\\
If \[ \displaystyle
\liminf_{s\rightarrow 0} {P(s)\left(\int_{s}^{M} \frac{1}{\sigma P(\sigma)} \ d\sigma \right)^{\frac1\Lambda}} > 0 \]
then 
\[\sup\limits_{\epsilon>0}\int_{I}\int_{\Omega}\theta|\nabla H(u_{\epsilon})|^2\leq C_\theta<\infty \] for every $\theta\in C^1_c(\Omega)$.
\end{proposition}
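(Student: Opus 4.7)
The plan is to establish a pointwise inequality $|\nabla H(u_\epsilon)|^2\le C\,|\nabla G(u_\epsilon)|^2$ with $C$ independent of $\epsilon$ and then apply Theorem~\ref{grad-G-bound}, which already furnishes the uniform local $L^2$ control of $\nabla G(u_\epsilon)$. Since $h=H'$ and $G'=\sqrt{F'}$, one has on $\{\nabla u_\epsilon\neq 0\}$
\[
|\nabla H(u_\epsilon)|^2 \;=\; \frac{h(u_\epsilon)^2}{F'(u_\epsilon)}\,|\nabla G(u_\epsilon)|^2,
\]
so the whole problem reduces to verifying that the ratio $h(s)^2/F'(s)$ is bounded on $(0,K]$, where $K$ is the uniform $L^\infty$ bound of $u_\epsilon$.

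To this end I would insert the explicit expressions \eqref{h-ret} and \eqref{F-result}, i.e.\ $sh=H^{\Lambda+1}/P$ and $F=H^{\Lambda+1}/s$, differentiate, and simplify to obtain the key identity
\[
\frac{h(s)^2}{F'(s)}\;=\;\frac{H(s)/P(s)}{(\Lambda+1)\bigl[\,1-\tfrac{\Lambda}{\Lambda+1}P(s)I(s)\bigr]}.
\]
The denominator is pinned away from $0$ by the hypothesis $A<(\Lambda+1)/\Lambda$ of Proposition~\ref{P-2}, since $P(s)I(s)\le A$ yields $1-\Lambda P(s)I(s)/(\Lambda+1)\ge 1-\Lambda A/(\Lambda+1)>0$. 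For the numerator, after noting that $\int_s^M(\sigma P(\sigma))^{-1}d\sigma=I(s)+O(1)$ near $0$ and that $H=[\Lambda I]^{-1/\Lambda}$, the added hypothesis of the proposition translates to $\liminf_{s\to 0}P(s)/H(s)>0$; hence $H/P$ is bounded on some $(0,s_0]$, and on the complementary interval $[s_0,K]$ the continuous positive function $H/P$ is bounded by compactness. Combining, $h(s)^2/F'(s)\le C=C(K,\Lambda,M)$ on $(0,K]$.

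Given $\theta\in C^1_c(\Omega)$, I would then pick $\tilde\theta\in C^1_c(\Omega)$ with $\tilde\theta\equiv 1$ on $\operatorname{supp}\theta$, apply the pointwise bound together with $\theta\le\|\theta\|_\infty\tilde\theta^{\,2}$ and estimate \eqref{1-star} of Theorem~\ref{grad-G-bound} with $\tilde\theta$ in place of $\theta$, to conclude
\[
\sup_{\epsilon>0}\int_I\!\!\int_\Omega \theta\,|\nabla H(u_\epsilon)|^2\,dx\,dt \;\le\; C'\sup_{\epsilon>0}\int_I\!\!\int_\Omega \tilde\theta^{\,2}\,|\nabla G(u_\epsilon)|^2\,dx\,dt \;\le\; C_\theta.
\]
The main obstacle is the algebraic reduction producing the clean form of $h^2/F'$ in terms of $H/P$ and $PI$: it is exactly the form matched by the two running hypotheses, the one from Proposition~\ref{P-2} killing the denominator's degeneracy and the new one supplying boundedness of the numerator at the critical point $s=0$. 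Everything after that is routine bookkeeping against cutoffs and Theorem~\ref{grad-G-bound}.
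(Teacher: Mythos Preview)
Your proposal is correct and follows essentially the same route as the paper: both reduce to bounding $(H'/G')^2=h^2/F'$ on $(0,K]$ via an explicit computation yielding a quantity equivalent to $\dfrac{H(s)/P(s)}{(\Lambda+1)/\Lambda-P(s)I(s)}$, then invoke Proposition~\ref{P-2} for the denominator and the new hypothesis for the numerator, finishing with Theorem~\ref{grad-G-bound}. Your treatment is in fact slightly more careful than the paper's, since you explicitly reconcile the $\int_s^M$ in the hypothesis with the $\int_s^\infty$ in $I(s)$ and spell out the cutoff fattening.
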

\begin{proof}
Note that
$ \displaystyle
    |\nabla H(u_{\epsilon})| 
    \leq
    \left\vert \frac{H'(u_{\epsilon})}{G'(u_{\epsilon})} \right \vert |\nabla G(u_{\epsilon})|
$.  Hence, by Theorem \ref{grad-G-bound}, it suffices to show that $\displaystyle \sup_{0<s<K} \left\vert \frac{H'(s)}{G'(s)} \right\vert <\infty.
$ For this end it suffices to verify
$ \displaystyle    \limsup_{s \rightarrow 0} \left\vert \frac{H'(s)}{G'(s)} \right\vert < \infty.
$\\\\
Recall (i) in Proposition \ref{P-1}. By \eqref{F-I}  we have
\begin{align}
F(s)     = &  \Lambda^{-\frac{1}{\Lambda}-1}s^{-1}(I(s))^{-\frac{1}{\Lambda}-1}\\
F^{'}(s) = & B s^{-2} {(I(s))^{-\frac{1}{\Lambda}-2}}({P(s)})^{-1}\left(\frac{1+\Lambda}{\Lambda} -P(s)I(s)\right) \ ; \ B\triangleq   \Lambda^{-\frac{1}{\Lambda}-1} \\
{G^{'}(s)} = & \sqrt{B} s^{-1} {(I(s))^{-\frac{1}{2\Lambda}-1}}({P(s)})^{-\frac{1}{2}}\sqrt{\frac{1+\Lambda}{\Lambda} - P(s)I(s)}  \label{F-deri-I}
\end{align}
Next we write $H(s)$ in term of $I(s)$. By \eqref{I-s} in \eqref{H-Result}
\begin{align}
    H(s)    = & (\Lambda I(s))^{-\frac{1}{\Lambda}}\\
    H^{'}(s)= & B {(I(s))^{-\frac{1}{\Lambda}-1}}s^{-1} (P(s))^{-1} \label{H-deri-I}
\end{align}
Using \eqref{H-deri-I} and \eqref{F-deri-I}
\begin{align}
    \left\vert \frac{H'(s)}{G'(s)} \right\vert 
    =\frac{1}{\sqrt{B}} \frac{1}{\sqrt{(I(s))^{\frac{1}{\Lambda}}P(s)}}\frac{1}{\sqrt{\frac{1+\Lambda}{\Lambda}-I(s)P(s)}} \label{ratio-1}
\end{align}
By Proposition \ref{P-2} we have 
$ \displaystyle \frac{1+\Lambda}{\Lambda} > I(s)P(s) > 0 \ ; \ s\in [0,M). $\\\\
Thus, by \eqref{ratio-1}  $ \displaystyle
    \lim_{s\rightarrow 0} \left\vert \frac{H'(s)}{G'(s)} \right\vert 
    < \infty $
whenever $
\lim_{s\rightarrow 0} \inf{P(s) (I(s))^{\frac1\Lambda}} > 0$. 
\end{proof}
\begin{theorem}
Let $u_{\epsilon}$ be a family of strong solutions to \eqref{ibvp-ep} that satisfies all conditions in Theorem \eqref{grad-G-bound}. Let $\Omega^{'}_{I} \triangleq \Omega^{'} \times I$ \ ; \ $\Omega'\Subset\Omega$.  Then  $u_{\epsilon}$ holds the following uniform estimates 
\begin{align}
\left \vert\int_{I}\int_{\Omega}  \Psi \frac{\partial  H(u_{\epsilon})}{\partial t} \ dxdt   \right \vert
\leq C(\Omega',K)\left(\vert\vert \Psi   \vert\vert_{L^{\infty}(\Omega\times I)}
+
\vert\vert   \nabla \Psi   \vert\vert_{L^2(\Omega\times I)}
\right), \label{L1-bound}
\end{align}
and
\begin{align}
\int_{I}\int_{\Omega'}|\nabla H(u_{\epsilon})|^2\ dxdt
\leq C(\Omega',K). 
\end{align}
for each $\Psi\in C^1_c(\Omega^{'}_{I})$  and for each $\Omega'\Subset\Omega$.
\end{theorem}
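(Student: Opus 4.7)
The plan is to handle the two bounds separately: the gradient estimate follows almost directly from Proposition \ref{H-bound-1} with a cutoff, while the duality estimate comes from testing the strong form of the equation against $\Psi$ and integrating by parts in space via \eqref{weak Laplace}.

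First I would dispose of the gradient bound. Choose $\theta\in C^1_c(\Omega)$ with $0\le\theta\le 1$ and $\theta\equiv 1$ on $\Omega'$. Since the hypotheses of Proposition \ref{H-bound-1} are in force in this setting, one gets
\[
\int_I\int_{\Omega'}|\nabla H(u_\epsilon)|^2\,dx\,dt\le\int_I\int_\Omega \theta\,|\nabla H(u_\epsilon)|^2\,dx\,dt\le C_\theta,
\]
with $C_\theta$ depending on $\Omega'$ and $K$ but independent of $\epsilon$; the $\epsilon$-uniformity is inherited from the $L^2_{loc}$ bound on $\nabla G(u_\epsilon)$ in Theorem \ref{grad-G-bound} via the pointwise relation $|\nabla H|\le|H'/G'|\,|\nabla G|$ exploited inside that proposition.

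Next, for the duality bound, I would treat the strong-form identity $\partial_t H(u_\epsilon)=(F(u_\epsilon)+\epsilon)\Delta u_\epsilon$, pair it with $\Psi\in C^1_c(\Omega'_I)$, and apply \eqref{weak Laplace} in the spatial variable to obtain
\[
\int_I\!\!\int_\Omega \Psi\,\partial_t H(u_\epsilon)\,dx\,dt
=-\int_I\!\!\int_\Omega F(u_\epsilon)\nabla u_\epsilon\cdot\nabla\Psi\,dx\,dt
-\int_I\!\!\int_\Omega |\nabla G(u_\epsilon)|^2\Psi\,dx\,dt
-\epsilon\!\int_I\!\!\int_\Omega \nabla u_\epsilon\cdot\nabla\Psi\,dx\,dt.
\]
Cauchy--Schwarz bounds the first term by $\|F\|_{L^\infty[0,K]}\,\|\nabla u_\epsilon\|_{L^2(\supp\Psi)}\,\|\nabla\Psi\|_{L^2}$, with $F$ bounded on $[0,K]$ by continuity (and $F(0)=0$) and $\|\nabla u_\epsilon\|_{L^2}$ uniform in $\epsilon$ by Theorem \ref{grad-u-bound}. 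The middle term is bounded by $\|\Psi\|_{L^\infty}\,\|\nabla G(u_\epsilon)\|_{L^2(\supp\Psi)}^2$, which is $\epsilon$-uniform by Theorem \ref{grad-G-bound}. The third term is $O(\epsilon)$ and is absorbed. Summing these three estimates gives the claimed bound with $C=C(\Omega',K)$.

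The principal obstacle will be pure bookkeeping of $\epsilon$-uniformity: every constant appearing on the right --- $\|F\|_{L^\infty[0,K]}$, the local $L^2$ norms of $\nabla u_\epsilon$ and $\nabla G(u_\epsilon)$, and the ratio $H'/G'$ used inside Proposition \ref{H-bound-1} --- must be controlled independently of $\epsilon$. Each ingredient is already furnished by the energy estimates derived in the preceding results, so I expect no new analytic difficulty; the proof essentially amounts to an organized assembly of those bounds.
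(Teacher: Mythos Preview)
Your proposal is correct and follows the same overall scheme as the paper: the gradient bound is delegated to Proposition \ref{H-bound-1} via a cutoff, and the duality bound is obtained by testing the equation against $\Psi$ and integrating by parts in space into the two terms $F(u_\epsilon)\nabla u_\epsilon\cdot\nabla\Psi$ and $|\nabla G(u_\epsilon)|^2\Psi$.

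The one genuine divergence is in how the cross term $\int F(u_\epsilon)\nabla u_\epsilon\cdot\nabla\Psi$ is controlled. The paper invokes the structural inequality \eqref{A-1}, $F\le C_1 GG'$, to rewrite $F(u_\epsilon)\nabla u_\epsilon = C_1 G(u_\epsilon)\nabla G(u_\epsilon)$ and then bounds by $\|G(u_\epsilon)\|_{L^\infty}\|\nabla G(u_\epsilon)\|_{L^2(\supp\Psi)}\|\nabla\Psi\|_{L^2}$, so that everything stays inside the $G$-calculus and only Theorem \ref{grad-G-bound} is needed. You instead use $\|F\|_{L^\infty[0,K]}\|\nabla u_\epsilon\|_{L^2}\|\nabla\Psi\|_{L^2}$, which is more elementary (no appeal to \eqref{A-1} here) but imports the separate energy estimate for $\nabla u_\epsilon$ from Theorem \ref{grad-u-bound}. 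Both routes are valid and yield the same $\epsilon$-independent constant; the paper's choice simply keeps the argument self-contained within the $G$-framework.
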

\begin{proof}
Multiply both sides of the inequality in  \eqref{ibvp-ep}  by $ \Psi$ and integrate over $\Omega^{'}_{I}$ we get,  
\begin{align}
\left\vert \int_{\Omega^{'}_{I}} \Psi \frac{\partial H(u_\epsilon)}{\partial t} \ dxdt  \right\vert 
 &\leq  
 \int_{\Omega^{'}_{I}} \left\vert \nabla u_{\epsilon} \cdot \nabla (F( u_{\epsilon})\Psi)\right\vert \ dxdt \nonumber \\
& \leq  
  \int_{\Omega^{'}_{I}} \left\vert F(u_{\epsilon})
 \nabla u_{\epsilon}\cdot\nabla \Psi \right\vert
 + 
 \left\vert \Psi F^{'}( u_{\epsilon}) (\nabla u_{\epsilon})^{2}
 \right\vert \ dxdt \label{z-1}  \nonumber \\
& \leq 
C_{2}\int_{\Omega^{'}_{I}}  \left\vert G(u_{\epsilon}) G^{'}(u_{\epsilon})
   \nabla u_{\epsilon}\cdot\nabla \Psi \right\vert
   \ dxdt
   + 
\int_{\Omega^{'}_{I}}   \left\vert \Psi (G^{'}( u_{\epsilon}))^2 (\nabla u_{\epsilon})^{2} \right\vert \ dxdt \nonumber
\end{align}
Compute the right hand side of above
\begin{align}
    & \leq 
   C_{2}\vert \vert G(u_{\epsilon}) \vert \vert_{L^{\infty}(\Omega^{'}_{I})} \int_{\Omega^{'}_{I}}  \vert \nabla G(u_{\epsilon}) \vert\vert \nabla \Psi \vert  \ dxdt
   + 
   \vert\vert \Psi \vert \vert_{L^{\infty}(\Omega^{'}_{I})} \int_{\Omega^{'}_{I}}  \vert \nabla G(u_{\epsilon})\vert^2
   \ dxdt \nonumber\\
& \leq  
C_{2}\vert \vert G(u_{\epsilon}) \vert \vert_{L^{\infty}(\Omega^{'}_{I})}  \cdot \vert\vert \mathbf{1}_{\supp \Psi} \nabla G(u_{\epsilon}) \vert \vert_{L^{2}(\Omega^{'}_{I})} \cdot
\vert \vert \nabla \Psi \vert \vert_{L^{2}(\Omega^{'}_{I})} 
+
\vert\vert \Psi \vert \vert_{L^{\infty}(\Omega^{'}_{I})} \cdot
\vert  \vert \mathbf{1}_{\supp \Psi} \nabla G(u_{\epsilon}) \vert \vert_{L^{2}(\Omega^{'}_{I})}^{2}
\nonumber\\
& \leq  
C_{2}\vert \vert G(u_{\epsilon}) \vert \vert_{L^{\infty}(\Omega^{'}_{I})}  \cdot \vert\vert \mathbf{1}_{\supp \Psi} \nabla G(u_{\epsilon}) \vert \vert_{L^{2}(\Omega^{'}_{I})} \cdot
(\vert \vert \nabla \Psi \vert \vert_{L^{2}(\Omega^{'}_{I})} +\vert\vert \Psi \vert \vert_{L^{\infty}(\Omega^{'}_{I})} ) \nonumber \\
& \quad  \quad  \quad \quad  \quad  \quad \quad  \quad  \quad   + 
(\vert \vert \nabla \Psi \vert \vert_{L^{2}(\Omega^{'}_{I})} +\vert\vert \Psi \vert \vert_{L^{\infty}(\Omega^{'}_{I})} ) \cdot
\vert  \vert \mathbf{1}_{\supp \Psi} \nabla G(u_{\epsilon}) \vert \vert_{L^{2}(\Omega^{'}_{I})}^{2} \nonumber\\
& \leq  
C_{2}\left(\vert \vert \nabla \Psi \vert \vert_{L^{2}(\Omega^{'}_{I})} +\vert\vert \Psi \vert \vert_{L^{\infty}(\Omega^{'}_{I})} \right) \cdot
\left(\vert \vert G(u_{\epsilon}) \vert \vert_{L^{\infty}(\Omega^{'}_{I})}  \cdot \vert\vert  \nabla G(u_{\epsilon}) \vert \vert_{L^{2}(\Omega^{'}_{I})} +
\vert  \vert  \nabla G(u_{\epsilon}) \vert \vert_{L^{2}(\Omega^{'}_{I})}^{2}\right) \nonumber
\end{align}
Recall that $0 < \epsilon \leq u_{\epsilon} \leq K < \infty $ and $G(u_{\epsilon})$ are uniformly bounded. 
\begin{align}
\therefore C_{2}\vert \vert G(u_{\epsilon}) \vert \vert_{L^{\infty}(\Omega^{'}_{I})}  \cdot \vert\vert  \nabla G(u_{\epsilon}) \vert \vert_{L^{2}(\Omega^{'}_{I})} +
\vert  \vert  \nabla G(u_{\epsilon}) \vert \vert_{L^{2}(\Omega^{'}_{I})}^{2} \leq C(\Omega^{'},K)
\end{align}
which follows the result $\eqref{L1-bound}$ \qedhere
\end{proof}
From above theorem follows important statement
\begin{corollary}\label{proposition on compactness}
If we obtain an estimate analogous to \eqref{1-star}, replacing $\nabla G(u_{\epsilon})$ with $\nabla H(u_{\epsilon})$,  then we can apply Theorem \eqref{compact} to conclude compactness of $\{ H(u_{\epsilon})\}$ in $L^{2} (I,L^{q}_{loc}(\Omega))$ with $ q <\frac{2N}{N-2} $. Since $H:\mathbb{R}\to \mathbb{R}$ is a homeomorphism, and $u_\epsilon$ is uniformly bounded, it follows that  
$\{u_{\epsilon}\}$ is compact in $L^{q}_{loc} (\Omega\times I)$, $q\geq1$.
\end{corollary}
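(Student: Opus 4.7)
The plan is to deploy Theorem \ref{compact} (an Aubin--Lions--Simon type compactness lemma) on the family $\{H(u_{\epsilon})\}$ and then transfer the resulting compactness to $\{u_{\epsilon}\}$ using that $H$ is a homeomorphism on $[0,K]$. Fix a subdomain $\Omega'\Subset\Omega$ and a cutoff $\theta\in C^1_c(\Omega)$ with $\theta\equiv 1$ on $\Omega'$.

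First, I would combine the hypothesized analog of \eqref{1-star} with $\nabla H$ in place of $\nabla G$ with the $L^\infty$ bound $0\leq H(u_{\epsilon})\leq H(K)$ (which follows from $u_{\epsilon}\leq K$ and continuity of $H$) to conclude that $\{H(u_{\epsilon})\}$ is uniformly bounded in $L^{2}(I,W^{1,2}(\Omega'))$. This supplies the "small" space $B_{0}=W^{1,2}(\Omega')$ in the triple $B_{0}\Subset B_{1}\subset B_{2}$.

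Second, I would use the estimate \eqref{L1-bound} to bound $\partial_{t} H(u_{\epsilon})$ uniformly in $L^{1}(I,B_{2})$ for an appropriate negative Sobolev space $B_{2}$. Concretely, \eqref{L1-bound} says that $\partial_{t}H(u_{\epsilon})$ acts on test functions $\Psi\in C^{1}_{c}(\Omega'_{I})$ with norm controlled by $\|\Psi\|_{L^{\infty}}+\|\nabla\Psi\|_{L^{2}}$. For $p>N$ the Sobolev embedding $W^{1,p}_{0}(\Omega')\hookrightarrow L^{\infty}(\Omega')\cap W^{1,2}(\Omega')$ lets me take $B_{2}=(W^{1,p}_{0}(\Omega'))^{*}$. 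With the intermediate space $B_{1}=L^{q}(\Omega')$ for any $q\in[1,\tfrac{2N}{N-2})$, Rellich--Kondrachov gives $B_{0}\Subset B_{1}$ and the chain $B_{1}\hookrightarrow B_{2}$ is immediate. Theorem \ref{compact} then yields precompactness of $\{H(u_{\epsilon})\}$ in $L^{2}(I,L^{q}(\Omega'))$; since $\Omega'$ was arbitrary, we get precompactness in $L^{2}(I,L^{q}_{loc}(\Omega))$.

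Finally, I would transfer compactness from $H(u_{\epsilon})$ to $u_{\epsilon}$. Pick a subsequence so that $H(u_{\epsilon})\to v$ in $L^{2}(I,L^{q}_{loc})$ and, extracting further, almost everywhere on $\Omega\times I$. Because $P>0$ on $(0,\infty)$ implies $h>0$, the function $H$ is strictly increasing and continuous on $[0,K]$ with $H(0)=0$, hence a homeomorphism onto its image; thus $u_{\epsilon}=H^{-1}(H(u_{\epsilon}))\to H^{-1}(v)$ a.e. The uniform bound $u_{\epsilon}\leq K$ then activates the dominated convergence theorem to upgrade a.e.\ convergence to convergence in $L^{q}_{loc}(\Omega\times I)$ for every $q\geq 1$.

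The main obstacle is a bookkeeping one: correctly identifying the negative space $B_{2}$ in which $\partial_{t}H(u_{\epsilon})$ lives, so that the embedding chain $B_{0}\Subset B_{1}\hookrightarrow B_{2}$ is rigorous and Theorem \ref{compact} applies. The rest (Rellich--Kondrachov, diagonalization over an exhaustion of $\Omega$, and the homeomorphism transfer via dominated convergence) is standard once that framework is fixed.
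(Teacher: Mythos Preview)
Your proposal is correct and follows the same skeleton the paper sketches: a spatial $W^{1,2}_{loc}$ bound on $H(u_\epsilon)$ from the hypothesised analogue of \eqref{1-star}, a time-derivative bound from \eqref{L1-bound}, Aubin--Lions--Simon (Theorem \ref{compact}), and then transfer to $u_\epsilon$ via the homeomorphism and dominated convergence. The paper does not give a full proof of the corollary; it only records in the subsequent bullet points that $\partial_t H(u_\epsilon)$ sits in the sum space $L^1(I,L^1_{loc}(\Omega)) + L^2(I,W^{-1,2}_{loc}(\Omega))$, coming from the decomposition $F(u_\epsilon)\Delta u_\epsilon = \operatorname{div}(F(u_\epsilon)\nabla u_\epsilon) - |\nabla G(u_\epsilon)|^2$.

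The one genuine difference is your choice of $B_2$. The paper stops at the sum space, which does not literally match the hypothesis of Theorem \ref{compact} (a single Banach space $B_2$ with $f'\in L^1(I,B_2)$). Your move of taking $B_2=(W^{1,p}_0(\Omega'))^*$ with $p>N$ is a clean way to close that gap: since $W^{1,p}_0(\Omega')\hookrightarrow L^\infty(\Omega')\cap W^{1,2}_0(\Omega')$, one has $L^1(\Omega')+W^{-1,2}(\Omega')\hookrightarrow (W^{1,p}_0(\Omega'))^*$, so the paper's sum-space bound implies yours, and the chain $W^{1,2}(\Omega')\Subset L^q(\Omega')\hookrightarrow (W^{1,p}_0(\Omega'))^*$ is exactly what Theorem \ref{compact} requires. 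In that sense your bookkeeping is slightly more complete than the paper's own outline.
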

\begin{itemize}
\item 
Estimate \eqref{1-star} implies $G(u_{\epsilon})$ are uniformly bounded in $L^{2}(I, W^{1,2}_{loc}(\Omega)) $.
\item
Estimate \eqref{H-t-1} together with \eqref{1-star}  implies that ${\partial H(u_{\epsilon})}/{\partial t}$ is uniformly bounded in $L^{1}(I,L_{loc}^{1}(\Omega)) + L^{2}(I,W_{loc}^{-1,2}(\Omega))$ by  Theorem \ref{interpolation*}(3).
\item
To obtain for  sufficient conditions under which last Corollary \ref{proposition on compactness} holds w.r.t.  input functions, it is suffice to have  Proposition \ref{H-bound-1}.
\end{itemize}
\end{section}

\begin{section}{Appendix}
In the prove of the main theorem \ref{Main-T} on localisation we used basic iterative inequality and conclusion from it, which explicitly formulated by Lady\v{z}enskaja - Solonnikov- Ural'ceva in \cite{LAU}.
\begin{Lad-Ur}\label{Lady-lemma}
Let sequence $y_n \ ; \ n=0,1,2,...$ be non-negative, satisfying the recursion inequality
$ \displaystyle  y_{n+1}\leq c\text{ }b^n \text{ }y_n^{1+\epsilon} $ with some constants $ c ,\epsilon > 0 \text{ and } b\geq 1$. Then
\[ \displaystyle y_n \leq c^{\frac{(1+\epsilon)^n -1}{\epsilon}}\text{ } b^{\frac{(1+\epsilon)^n -1}{\epsilon^2} -\frac{n}{\epsilon}}\text{ }y_0^{(1+\epsilon)^n}.\]
In particular $ \displaystyle \text{if} \quad y_0 \leq \theta_L = c^\frac{-1}{\epsilon} \text{ } b^\frac{-1}{\epsilon^2} \text{ and } {b > 1}  \text{ then } y_n \leq \theta\text{ } b^{\frac{-n}{\epsilon}}$ 
and consequently \[ \displaystyle y_n \rightarrow 0 \text{ when } n\rightarrow \infty.\]
\end{Lad-Ur}

\bibliography{PAPER-1}

\begin{thebibliography}{1}

\bibitem{Aronson-1}
D.~G. Aronson.
\newblock {\em The porous medium equation. In: Fasano A., Primicerio M. (eds)
  Nonlinear Diffusion Problems}, volume 1224 of {\em Lecture Notes in
  Mathematics}.
\newblock Springer, Berlin,Heidelberg, 1986.
\newblock \url{https://doi.org/10.1007/BFb0072687}.

\bibitem{Barenblatt-96}
G.~I. Barenblatt.
\newblock {\em Scaling, Self-similarity, and Intermediate Asymptotics}.
\newblock Cambridge University Press, 1996.
\newblock \url{10.1017/CBO9781107050242}.

\bibitem{CIL92}
M.~G. Crandall, H.~Ishii, and P.-L. Lions.
\newblock User's guide to viscosity solutions of second order partial
  differential equations.
\newblock {\em Bull. Amer. Math. Soc.}, 27:1--67, 1992.
\newblock \url{10.1090/S0273-0979-1992-00266-5}.

\bibitem{Einstein56}
A.~Einstein.
\newblock {\em Investigations on the Theory of the Brownian Movement}.
\newblock 1956.

\bibitem{Evan}
L.~C. Evans.
\newblock {\em Partial Differential Equations}, volume~19.
\newblock American Mathematical Society, 2010.

\bibitem{cpmct-SIM}
{J. Simon}.
\newblock {Compact sets in the space L$^p$(O,T ; B)}.
\newblock {\em {Annali di Matematica Pura ed Applicata}}, {146}:{65--96},
  {1986}.
\newblock \url{https://doi.org/10.1007/BF01762360}.

\bibitem{kon}
K.~K{\"o}nigsberger.
\newblock {\em Analysis 2}, volume~2 of {\em Springer-Lehrbuch}.
\newblock Springer Berlin Heidelberg, 2006.
\newblock \url{https://books.google.lk/books?id=V3crjPiI-mMC}.

\bibitem{LAU}
O.~A. Lady\v{z}enskaja, V.~A. Solonnikov, and N.~N. Ural'ceva.
\newblock {\em Linear and Quasi-linear Equations of Parabolic Type}, volume~23
  of {\em Translations of Mathematical Monographs}.
\newblock American Mathematical Society, Providence, RI, 1968.

\bibitem{Muskat}
M.~Muskat.
\newblock {\em The Flow of Homogeneous Fluids Through Porous Media}.
\newblock UMI Books on Demand. Ann Arbor ( Michigan ) ,UMI, 2004.

\end{thebibliography}
 \bibliographystyle{abbrv}
\end{section}
\end{document}